\newtheorem{theorem}{Theorem}
\newtheorem{claim}{Claim}[theorem]
\newtheorem{lemma}{Lemma}
 \def \sm {\setminus}
 \def \es {\emptyset}
\newenvironment{proof}[1][]%
{\noindent {\setcounter{equation}{0}\it Proof.
}{#1}{}}{\hfill$\Box$\vspace{2ex}}
\newenvironment{proof2}[1][]%
{\noindent {\setcounter{equation}{0}\it Proof.
}{#1}{}}{\hfill$\Diamond$\vspace{2ex}}
\begin{document}

\title{Coloring graphs with no induced five-vertex path or gem\thanks{Dedicated to the memory of Professor~Fr\'ed\'eric Maffray.}}

\author{M.~Chudnovsky\thanks{Princeton University, Princeton, NJ 08544, USA.
Supported by NSF grant DMS-1763817.
This material is based upon work supported in part by the U.~S.~Army  Research
Laboratory and the U.~S.~Army Research Office under    grant number
W911NF-16-1-0404.} \and T.~Karthick\thanks{Computer Science Unit, Indian Statistical
Institute, Chennai Centre, Chennai 600029, India.}
\and{P.~Maceli\thanks{Adelphi University, Garden City, NY 11530, USA.}}
\and%
Fr\'ed\'eric Maffray\thanks{Deceased on August 22, 2018.}}

\date{\today}

\maketitle

\begin{abstract}
For a graph $G$, let $\chi(G)$ and $\omega(G)$ respectively denote the chromatic number  and clique number of $G$.
   We give an explicit structural description of ($P_5$,\,gem)-free graphs, and  show that every such graph $G$ satisfies   $\chi(G)\le \lceil\frac{5\omega(G)}{4}\rceil$. Moreover, this bound is  best possible.

\bigskip
\noindent{\bf Keywords}: $P_5$-free graphs; Chromatic number; Clique
number; $\chi$-boundedness.
\end{abstract}

\section{Introduction}

All our graphs are finite and have no loops or multiple edges. For any integer $k$, a \emph{$k$-coloring} of a graph $G$ is a mapping
$\phi:V(G)\rightarrow\{1,\ldots,k\}$ such that any two adjacent vertices
$u,v$ in $G$ satisfy $\phi(u)\neq \phi(v)$.  A graph is \emph{$k$-colorable}
if it admits a $k$-coloring.  The \emph{chromatic number} $\chi(G)$ of
a graph $G$ is the smallest integer $k$ such that $G$ is
$k$-colorable.   A \emph{clique} in a graph
$G$ is a set of pairwise adjacent vertices, and the \emph{clique number} of $G$, denoted by $\omega(G)$, is the size of a maximum clique in $G$. Clearly $\chi(H)\ge \omega(H)$ for every induced subgraph $H$ of $G$.
A graph $G$ is \emph{perfect} if every induced subgraph $H$ of $G$
satisfies $\chi(H) = \omega(H)$. Following Gy\'arf\'as
\cite{Gyarfas}, we say that a class of graphs is \emph{$\chi$-bounded}
if there is a function $f$ (called a \emph{$\chi$-bounding function}) such that every member $G$ of the class
satisfies $\chi(G)\le f(\omega(G))$. Thus the class of perfect graphs is $\chi$-bounded with $f(x)=x$.

For any integer $\ell$ we let $P_\ell$ denote the path
on $\ell$ vertices and $C_\ell$ denote the cycle on $\ell$ vertices. The \emph{gem} is the graph that consists of a $P_4$ plus a vertex
adjacent to all vertices of the $P_4$. A \emph{hole} (\emph{antihole}) in a graph is an induced subgraph that is isomorphic to $C_\ell$ ($\overline{C_\ell}$) with $\ell \ge 4$, and $\ell$ is the
length of the hole (antihole). A hole or an antihole is \emph{odd} if $\ell$ is odd.
Given a family of graphs ${\cal F}$, a graph $G$ is \emph{${\cal
F}$-free} if no induced subgraph of $G$ is isomorphic to a member of
${\cal F}$; when ${\cal F}$ has only one element $F$ we say that $G$
is $F$-free; when $\cal{F}$ has two elements $F_1$ and $F_2$, we simply write $G$ is ($F_1,F_2$)-free instead of $\{F_1,F_2\}$-free.
    Here we
are interested on $\chi$-boundedness for the class of ($P_5$,\,gem)-free graphs.

Gy\'arf\'as \cite{Gyarfas}
showed  that the class of $P_t$-free graphs is $\chi$-bounded. Gravier et al.~\cite{GHM} improved Gy\'arf\'as's
bound slightly by proving that every $P_t$-free graph $G$ satisfies
$\chi(G) \le (t-2)^{\omega(G)-1}$. It is well known that every $P_4$-free graph is perfect.
The preceding result implies that every $P_5$-free graph $G$ satisfies $\chi(G)\le 3^{\omega(G)-1}$.
 The problem of
determining whether the class of $P_5$-free graphs admits a polynomial
$\chi$-bounding function remains open, and the known $\chi$-bounding
function $f$ for such class of graphs satisfies $c(\omega^2/\log w)\le
f(\omega)\le 2^{\omega}$; see \cite{KPT-P5}.  So the recent focus is on
obtaining  $\chi$-bounding functions for some classes of
$P_5$-free graphs, in particular, for ($P_5,H$)-free graphs, for various graphs $H$.
The first author and Sivaram \cite{Chud-Siva} showed that every ($P_5,C_5$)-free graph $G$ satisfies $\chi(G) \leq 2^{\omega(G)-1}$, and that every ($P_5$, bull)-free graph $G$ satisfies $\chi(G) \le \binom{\omega(G)+1}{2}$. Fouquet
et al.~\cite{Fouquet} proved that there are infinitely many
($P_5,\,\overline{P_5}$)-free graphs $G$ with $\chi(G)\ge
\omega(G)^{\alpha}$, where $\alpha= \log_2 5-1$, and that every
($P_5,\,\overline{P_5}$)-free graph $G$ satisfies $\chi(G)\le
{{\omega(G)+1}\choose{2}}$.  The second author with Choudum and Shalu \cite{CKS}
studied the class of ($P_5$,\,gem)-free graphs and showed that every
such graph $G$ satisfies $\chi(G)\le 4\omega(G)$.
Later Cameron, Huang and Merkel \cite{CHM} impvroved this result replacing
$4 \omega$ with $\lfloor \frac{3 \omega(G)} {2} \rfloor$.
In this paper we  establish the best possible bound, as follows.

\begin{theorem}\label{thm:54bound}
Let $G$ be a ($P_5$,\,gem)-free graph. Then $\chi(G) \le \lceil\frac{5\omega(G)}{4}\rceil$.   Moreover, this bound is tight.
\end{theorem}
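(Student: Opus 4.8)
The plan is to obtain an explicit structural decomposition of $(P_5,\text{gem})$-free graphs and then color each structural piece optimally. The first task is to understand the local structure forced by excluding both a $P_5$ and a gem. A natural starting point is to examine the neighborhood of a maximal clique or the structure around a $C_5$, since $P_5$-free graphs that contain a $C_5$ behave very differently from those that do not. Accordingly, I would split the argument into two regimes: the $C_5$-free case and the case where an induced $C_5$ is present. In the $C_5$-free regime, $(P_5,\text{gem},C_5)$-free graphs should be highly restricted---I would expect them to be perfect or nearly so---yielding $\chi(G)\le\omega(G)$ or a bound comfortably below $\lceil 5\omega/4\rceil$; the Strong Perfect Graph Theorem, or direct arguments showing no odd hole or antihole survives, would be the tool here.

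The heart of the matter is the case where $G$ contains an induced $C_5$. Here I would fix a $C_5$ with vertex set $\{v_1,\dots,v_5\}$ and classify every other vertex of $G$ by its attachment pattern to this $C_5$. Gem-freeness should sharply limit which attachment types can occur, and $P_5$-freeness should force vertices with "compatible" attachments to dominate or be dominated by one another. The expected outcome is that $V(G)$ partitions into a small number of cliques or homogeneous sets arranged around the $C_5$---plausibly a blow-up of the $C_5$ (five cliques $A_1,\dots,A_5$, each $A_i$ complete to $A_{i\pm1}$ and anticomplete to the others) together with a few additional well-controlled parts. The key structural claim to isolate and prove is that, up to some clean attachments, $G$ is essentially such a blow-up of $C_5$, possibly joined to a perfect graph.

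The coloring step is where the constant $\tfrac54$ emerges, and I expect it to be the decisive quantitative calculation. For a pure blow-up of $C_5$ with clique sizes $a_1,\dots,a_5$, the clique number is $\omega=\max_i(a_i+a_{i+1})$, while the chromatic number is governed by the fractional structure of an odd cycle: the $C_5$ itself needs $\tfrac52$ colors per unit, and balancing gives $\chi\le\lceil\tfrac54\omega\rceil$ exactly when the $a_i$ are equal. I would verify that an explicit coloring of the blow-up meets this bound---assigning color classes that each form a stable set meeting two consecutive cliques---and then argue that adjoining perfect or clique parts never forces more than $\lceil\tfrac54\omega\rceil$ colors, using that a join of $G_1$ and $G_2$ satisfies $\chi=\chi(G_1)+\chi(G_2)$ and $\omega=\omega(G_1)+\omega(G_2)$, so the ratio is preserved under favorable combinations. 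The main obstacle will be the attachment analysis: ruling out the many a priori possible neighborhood patterns on the $C_5$ without a gem or $P_5$ appearing is delicate case-work, and ensuring the homogeneous-set reductions genuinely reduce to the clean blow-up (rather than some messier hybrid) is where the proof will demand the most care. Finally, tightness follows by exhibiting the balanced $C_5$-blow-up with all $a_i=k$, giving $\omega=2k$ and $\chi=5k/2=\lceil\tfrac54\omega\rceil$, so the bound cannot be improved.
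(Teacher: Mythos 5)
Your high-level plan---reduce to the case of an induced $C_5$ via the Strong Perfect Graph Theorem, analyze attachments to that $C_5$, and obtain tightness from the balanced $C_5$-blow-up---matches the paper's skeleton, and two of your pieces are exactly right: a ($P_5$,\,gem,\,$C_5$)-free graph has no odd hole (holes of length at least $7$ contain a $P_5$) and no odd antihole (antiholes of length at least $7$ contain a gem, and $\overline{C_5}=C_5$), hence is perfect by the SPGT; and the balanced blow-up with $\omega=2k$ and $\chi=\lceil\frac{5k}{2}\rceil$ is precisely the paper's tightness example. But your announced key structural claim---that $G$ is ``essentially a blow-up of $C_5$, possibly joined to a perfect graph''---is genuinely false, and this is where the real work lies. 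The paper's structure theorem (Theorem~\ref{thm:struc}) shows that a connected ($P_5$,\,gem)-free graph containing $C_5$ is a $P_4$-free expansion of one of \emph{ten} specific basic graphs $G_1,\ldots,G_{10}$, or lies in an infinite class $\cal{H}$ with seven parts in which the adjacency between two parts ($A_6$ and $A_7$) is not even fully specified. Several of these outcomes are not blow-ups of $C_5$ joined to anything perfect: for instance $G_{10}$ is the circulant on nine vertices $u_1,\ldots,u_9$ with edges $u_iu_{i+1}$ and $u_iu_{i+3}$ (indices modulo $9$). These arise unavoidably from the attachment analysis you sketch, via vertices complete to one $A_i$ and mixed on $A_{i\pm2}$ (the sets $Y_i$) and via a remainder $R$ anticomplete to the $C_5$-blow-up, so the case-work cannot terminate in the clean picture you anticipate.

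Consequently your coloring step has no mechanism to handle the pieces that actually occur. The join formula $\chi=\chi_1+\chi_2$, $\omega=\omega_1+\omega_2$ covers only joins, which most of the basic graphs are not, and the fractional balancing of a pure $C_5$-blow-up says nothing about, say, clique expansions of $G_7$, $G_9$, $G_{10}$, or members of $\cal{H}$. The paper needs three further ingredients that your proposal lacks: (i) a homogeneous-set reduction (Lemmas~\ref{lem:red} and~\ref{lem:red2}) replacing each $P_4$-free bag by a clique while preserving $\chi$ and $\omega$, so that only clique expansions must be colored; (ii) a minimal-counterexample framework with four quantitative tools (Theorem~\ref{thm:tools}): a vertex of degree at most $\lceil\frac{5}{4}\omega\rceil-1$, a stable set meeting every maximum clique, a stable set whose deletion leaves a perfect graph, or $t\ge5$ stable sets whose deletion drops $\omega$ by $t-1$; and (iii) explicit ad hoc colorings, e.g.\ for clique expansions of $G_9$ split according to $q\equiv 0$ or $2 \pmod 4$, together with delicate accounting of which pairs $Q_iQ_j$ can be maximum cliques. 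So while your outline points in the right direction and you correctly flag the attachment case-work as the hard part, the proposal as written would fail at its central structural claim, and even granting a correct structure theorem it supplies no method to color the configurations that theorem actually produces.
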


The degree of a vertex in a graph $G$ is the number of vertices adjacent to
it.  The maximum degree over all vertices in $G$ is denoted by
$\Delta(G)$. Clearly  every graph $G$ satisfies $\omega(G)\le \chi(G)\le \Delta(G)+1$.
Reed~\cite{Reed} conjectured that every graph $G$ satisfies $\chi(G)
\leq \lceil\frac{\Delta(G) + \omega(G) +1}{2}\rceil$. Reed's conjecture is still
open in general.
It is shown in \cite{KM2018} that if a graph $G$ satisfies $\chi(G) \le \lceil\frac{5\omega(G)}{4}\rceil$, then $\chi(G) \leq
\lceil\frac{\Delta(G) + \omega(G) +1}{2}\rceil$. So by Theorem~\ref{thm:54bound}, we immediately have the following theorem.

\begin{theorem}\label{thm:reeds}
Let $G$ be a  ($P_5$,\,gem)-free graph. Then $\chi(G) \leq
\lceil\frac{\Delta(G) + \omega(G) +1}{2}\rceil$. Moreover, this bound is tight.
\end{theorem}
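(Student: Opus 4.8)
The plan is to read Theorem~\ref{thm:reeds} as an immediate corollary of Theorem~\ref{thm:54bound} together with the quoted result of \cite{KM2018}, and to put the real effort into the tightness claim. Since the class of $(P_5,\text{gem})$-free graphs is hereditary, Theorem~\ref{thm:54bound} in fact yields $\chi(H)\le\lceil 5\omega(H)/4\rceil$ for \emph{every} induced subgraph $H$ of $G$; this hereditary form is exactly the hypothesis needed to invoke \cite{KM2018}, which upgrades a $\frac54\omega$-type bound to Reed's bound $\lceil(\Delta+\omega+1)/2\rceil$. Composing the two gives $\chi(G)\le\lceil(\Delta(G)+\omega(G)+1)/2\rceil$ with no further work.

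I would flag explicitly why this implication is not a one-line triviality, since that is where the only conceptual content of the upper bound sits. One might hope simply to combine the trivial $\chi\le\Delta+1$ with $\chi\le\lceil5\omega/4\rceil$ and keep the smaller; but $\min\{\Delta+1,\lceil5\omega/4\rceil\}$ can strictly exceed $\lceil(\Delta+\omega+1)/2\rceil$ (e.g.\ for $\omega=8$, $\Delta=9$ the minimum is $10$ while Reed's bound is $9$). The point of \cite{KM2018} is that the extremal case $\chi=\Delta+1$ is controlled by Brooks' theorem, so away from cliques and odd cycles one has the sharper $\chi\le\Delta$, after which a short case analysis on the size of $\Delta$ relative to $\omega$ closes the gap. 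As this is precisely the cited statement, I would invoke it rather than reprove it.

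For tightness I would exhibit, for each $t\ge1$, the graph $G_t$ obtained from $C_5$ by substituting a clique $K_t$ for each of its five vertices (the lexicographic product $C_5[K_t]$). First I would check $G_t$ is $(P_5,\text{gem})$-free: both $P_5$ and the gem are prime, hence the class is closed under substitution, and $G_t$ is assembled by substitution from the class members $C_5$ and $K_t$. Next, $\omega(G_t)=2t$ (two consecutive cliques) and $\Delta(G_t)=3t-1$ (a vertex sees its $t-1$ clique-mates together with the $2t$ vertices of the two neighbouring cliques). For the chromatic number the key observation is that $\alpha(G_t)=\alpha(C_5)=2$, so $\chi(G_t)\ge\lceil |V(G_t)|/\alpha(G_t)\rceil=\lceil 5t/2\rceil$, while Theorem~\ref{thm:54bound} supplies the matching upper bound $\chi(G_t)\le\lceil 5\omega(G_t)/4\rceil=\lceil 5t/2\rceil$; hence $\chi(G_t)=\lceil 5t/2\rceil$.

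It then remains to confirm that $G_t$ meets Reed's bound exactly: with $\Delta(G_t)=3t-1$ and $\omega(G_t)=2t$,
\[
\Big\lceil \tfrac{\Delta(G_t)+\omega(G_t)+1}{2}\Big\rceil=\Big\lceil\tfrac{(3t-1)+2t+1}{2}\Big\rceil=\Big\lceil\tfrac{5t}{2}\Big\rceil=\chi(G_t),
\]
so the inequality of Theorem~\ref{thm:reeds} holds with equality for every $t$. The main obstacle is thus not the upper bound, which is a citation, but the two verifications for the family $G_t$: that it genuinely lies in the class, and that its chromatic number is pinned down exactly. The latter is clean here precisely because the elementary lower bound $|V|/\alpha$ and the upper bound furnished by Theorem~\ref{thm:54bound} coincide, leaving no gap to estimate.
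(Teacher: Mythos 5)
Your route coincides with the paper's: the upper bound is obtained, exactly as in the paper, by feeding Theorem~\ref{thm:54bound} (which indeed applies hereditarily, since the class is hereditary) into the cited result of \cite{KM2018}, and your extremal family is the paper's own example --- the clique expansion of $C_5$ with bags of size $t$, with the same computation $\omega=2t$, $\Delta=3t-1$, $\chi=\lceil 5t/2\rceil$ forced by $\alpha=2$ together with the upper bound from Theorem~\ref{thm:54bound}. So in structure there is nothing new to compare.

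There is, however, one genuinely wrong step in your verification that $C_5[K_t]$ lies in the class: the gem is \emph{not} prime. It is the join of $K_1$ with $P_4$, so the vertex set of the $P_4$ is a nontrivial homogeneous set; in fact gem-free graphs are not closed under substitution at all, since substituting $P_4$ for one vertex of the gem-free graph $K_2$ produces the gem itself. Hence the general principle you invoke (``both forbidden graphs are prime, so the class is substitution-closed'') fails for the gem, and your membership claim is unproved as written. The repair is short and is implicit in the paper's remark that any \emph{$P_4$-free} expansion of a $(P_5,\text{gem})$-free graph stays in the class: the only nontrivial module of the gem is the vertex set of its $P_4$, so if an expansion with $P_4$-free bags contained an induced gem, either some bag would contain at least two of its vertices --- forcing that bag's trace to be the whole $P_4$-module and the bag to contain an induced $P_4$, impossible for a clique bag --- or the gem would meet each bag at most once and project to an induced gem of $C_5$, which does not exist. (Equivalently: adjacent vertices in one clique bag are true twins, and neither $P_4$, $P_5$, nor the gem contains true twins, so any copy of these meets each bag at most once and projects into $C_5$.) With that one step replaced by the direct check, your proof is complete and matches the paper's.
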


The bounds in Theorem~\ref{thm:54bound} and
in Theorem~\ref{thm:reeds} are tight on the following example.  Let
$G$ be a graph whose vertex-set is partitioned into five cliques $Q_1,
\ldots, Q_5$ such that for each $i\bmod 5$, every vertex in $Q_i$ is
adjacent to every vertex in $Q_{i+1}\cup Q_{i-1}$ and to no vertex in
$Q_{i+2}\cup Q_{i-2}$, and $|Q_i|=q$ for all $i$ ($q>0$).  It is easy
to check that $G$ is ($P_5$,\,gem)-free.  Moreover, we have
$\omega(G)=2q$, $\Delta(G)=3q-1$, and $\chi(G)\ge
\lceil\frac{5q}{2}\rceil$ since $G$ has no stable set of size~$3$.

\bigskip

Our proof of Theorem~\ref{thm:54bound} uses the structure theorem for ($P_5$,\,gem)-free graphs (Theorem~\ref{thm:struc}).
   Before
stating it we recall some definitions.

\medskip
Let $G$ be a graph with vertex-set $V(G)$ and edge-set $E(G)$. For any two subsets
$X$ and $Y$ of $V(G)$, we denote by $[X,Y]$, the set of edges that has
one end in $X$ and other end in $Y$.  We say that $X$ is
\emph{complete} to $Y$ or $[X,Y]$ is complete if every vertex in $X$
is adjacent to every vertex in $Y$; and $X$ is \emph{anticomplete} to
$Y$ if $[X,Y]=\emptyset$.  If $X$ is singleton, say $\{v\}$, we simply
write $v$ is complete (anticomplete) to $Y$ instead of writing $\{v\}$
is complete (anticomplete) to $Y$.   For any $x \in V(G)$, let $N(x)$ denote the set of all
neighbors of $x$ in $G$; and let $\mbox{deg}_G(x):=|N(x)|$.  The neighborhood $N(X)$ of a
subset $X \subseteq V(G)$ is the set $\{u \in V(G)\setminus X  \mid  u$ $
\mbox{~is adjacent to a vertex of }X\}$. If $X\subseteq V(G)$, then $G[X]$
denote the subgraph induced by $X$ in $G$.  A set $X \subseteq V(G)$ is a \emph{homogeneous set} if every vertex with a neighbor in $X$ is
complete to $X$. Note that in any  gem-free graph $G$, for every $v\in V(G)$, $N(v)$ induces a $P_4$-free graph, and hence the subgraph induced by a  homogeneous set in $G$ is $P_4$-free.

An \emph{expansion} of a graph $H$ is any graph $G$ such that $V(G)$ can
be partitioned into $|V(H)|$ non-empty  sets
$Q_v$, $v\in V(H)$, such that $[Q_u,Q_v]$ is complete if $uv\in E(H)$,
and $[Q_u,Q_v]=\es$ if $uv\notin E(H)$. An expansion of a graph is a \emph{clique expansion} if each $Q_v$ is a clique,
and is a \emph{$P_4$-free expansion} if each $Q_v$ induces a $P_4$-free graph.

\begin{figure}[h]
\centering
        \includegraphics[width=11cm]{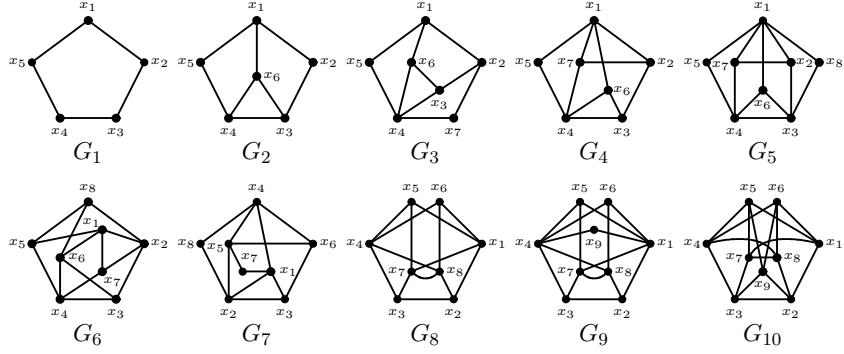}
\caption{Basic graphs}\label{fig:g1g10}
\end{figure}

Let $G_1,G_2,\ldots,G_{10}$ be the ten graphs  shown in Figure~\ref{fig:g1g10}.
 Clearly each of $G_1,\ldots,G_{10}$ is ($P_5$,\,gem)-free.  Moreover, it is
easy to check that any $P_4$-free expansion of a ($P_5$,\,gem)-free
graph is ($P_5$,\,gem)-free.

\medskip

 Let ${\cal H}$ be the class of connected ($P_5$,\,gem)-free graphs $G$ such
that $V(G)$ can be partitioned into seven non-empty sets $A_1, \ldots,
A_7$ such that:
\begin{itemize}
\item Each $A_i$ induces a $P_4$-free graph.
\item $[A_1,A_2\cup A_5\cup A_6]$ is complete and $[A_1,A_3\cup A_4\cup A_7]=\emptyset$.
\item $[A_3,A_2\cup A_4 \cup A_6]$ is complete and $[A_3,A_5\cup A_7]=\emptyset$.
\item
$[A_4, A_5\cup A_6]$ is complete and $[A_4, A_2\cup A_7]=\emptyset$.
\item
$[A_2, A_5\cup A_6\cup A_7]=\emptyset$ and $[A_5,A_6 \cup A_7]=\emptyset$.
\item The vertex-set of each component of $G[A_7]$ is a homogeneous set.
\item[]
(Adjacency between $A_6$ and $A_7$ is not specified, but it is
restricted by the fact that $G$ is ($P_5$,\,gem)-free.)
\end{itemize}

\smallskip
Now we can state our structural result.

\begin{theorem}\label{thm:struc}
Let $G$ be a connected ($P_5$,\,gem)-free graph that contains an induced $C_5$.
Then either $G\in \cal{H}$ or $G$ is a $P_4$-free expansion of either
$G_1$, $G_2$, \ldots, $G_9$ or $G_{10}$.
\end{theorem}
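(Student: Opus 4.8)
The plan is to fix an induced five-cycle $C=v_1v_2v_3v_4v_5$ (indices read mod~$5$) and to analyze $G$ through the \emph{attachment type} $S(u):=N(u)\cap V(C)$ of each vertex $u\notin V(C)$. The first goal is to show that only a handful of types can occur. Since $G$ is gem-free, $N(u)$ induces a $P_4$-free graph; as any four vertices of $C$ induce a $P_4$, a vertex with $|S(u)|\ge 4$ would complete a gem, so $|S(u)|\le 3$. Since $G$ is $P_5$-free, a vertex adjacent to exactly one $v_i$ yields the induced path $u\,v_i\,v_{i+1}\,v_{i+2}\,v_{i+3}$, so $|S(u)|\ne 1$; and a short argument from connectivity and $P_5$-freeness (extending a shortest path to $C$ by an edge of $C$) shows every vertex lies within distance two of $C$, so the case $S(u)=\emptyset$ means distance exactly two. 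Thus, up to the symmetry of $C$, each $u$ realizes one of the types $\{v_i,v_{i+1}\}$, $\{v_{i-1},v_{i+1}\}$, $\{v_{i-1},v_i,v_{i+1}\}$, $\{v_i,v_{i+1},v_{i+3}\}$, or $S(u)=\emptyset$.

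Next I would sort the vertices into the candidate classes of the description of $\mathcal{H}$. For $i\in\{1,\dots,5\}$ let $A_i$ consist of $v_i$ together with all $u$ of type $\{v_{i-1},v_{i+1}\}$ or $\{v_{i-1},v_i,v_{i+1}\}$; let the vertices of type $\{v_i,v_{i+1},v_{i+3}\}$ form the ``hub'' class, and let the distance-two vertices form the ``far'' class. The core structural work is a sequence of adjacency lemmas, each proved by exhibiting a forbidden induced $P_5$ or gem, establishing that: the sets $A_1,\dots,A_5$ behave as a $P_4$-free expansion of $C$ (each $A_i$ complete to $A_{i-1}\cup A_{i+1}$ and anticomplete to $A_{i-2}\cup A_{i+2}$, and internally $P_4$-free since $A_i\subseteq N(v_{i-1})$ and $N(v_{i-1})$ is $P_4$-free by gem-freeness); that all occurring hub vertices share one position on $C$, giving the set $A_6$ complete to $A_1\cup A_3\cup A_4$ and anticomplete to $A_2\cup A_5$, exactly as in the type $\{v_1,v_3,v_4\}$; and that the far vertices attach to the rest only through $A_6$, realizing $A_7$ with each component of $G[A_7]$ a homogeneous set (a standard consequence of $P_5$-freeness for parts at distance two), hence $P_4$-free by gem-freeness.

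The argument then splits according to whether the ``two consecutive'' type $\{v_i,v_{i+1}\}$ occurs and, more generally, whether all seven parts are genuinely present. A vertex of type $\{v_i,v_{i+1}\}$ fits no $A_j$ (the pair $\{v_{j-1},v_{j+1}\}$ is a non-edge while $\{v_i,v_{i+1}\}$ is an edge) and neither $A_6$ nor $A_7$; I would show that its presence, the absence of the far class $A_7$, or the presence of hub vertices at two distinct positions each collapses the adjacency pattern onto one of finitely many configurations. In each such configuration, grouping vertices with equal neighborhoods produces a module decomposition, gem-freeness forces every module to be $P_4$-free, and one recognizes $G$ as a $P_4$-free expansion of one of the fixed graphs $G_1,\dots,G_{10}$. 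Otherwise all the constraints are simultaneously compatible with the seven-part pattern, every part is non-empty, and $G\in\mathcal{H}$.

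The main obstacle is the volume and bookkeeping of the adjacency analysis: ruling out the ``bad'' coexistences requires chaining several $P_5$- and gem-forbidding arguments across multiple classes at once, and the closing step — certifying that each degenerate configuration is \emph{exactly} a $P_4$-free expansion of one of $G_1,\dots,G_{10}$, with no sporadic graph overlooked and none counted twice — is an exhaustive finite case check that must be organized carefully to be simultaneously complete and non-redundant.
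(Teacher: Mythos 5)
There is a genuine gap, and it sits at the foundation of your plan: you classify vertices by their attachment to a \emph{fixed} five-cycle $C$, but the adjacency lemmas you then assert for the resulting classes are false. Concretely, take $C=v_1v_2v_3v_4v_5$, add a vertex $u$ adjacent exactly to $v_1,v_3$, a vertex $w$ adjacent exactly to $v_3,v_5$, and the edge $uw$. This seven-vertex graph is connected and ($P_5$,\,gem)-free (the only vertex of degree at least $4$ is $v_3$, whose neighborhood $\{v_2,v_4,u,w\}$ spans a single edge, hence no $P_4$; and a short check shows no induced $P_5$), yet in your scheme $u\in A_2$ and $w\in A_4$ while $uw\in E(G)$, contradicting your claimed anticompleteness of $A_i$ to $A_{i\pm 2}$. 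The paper avoids exactly this trap: it does not classify against the cycle but against a \emph{maximal} blown-up $C_5$, i.e., it chooses the five sets $A_1,\ldots,A_5$ with $A_1\cup\cdots\cup A_5$ maximal, and that maximality is what makes the partition claim (Claim~\ref{ayr}) work --- a vertex of your ``pair'' or ``consecutive triple'' type must be complete to $A_{i-1}\cup A_{i+1}$ and is then absorbed into $A_i$, contradicting maximality. In the example above this re-sorting puts $u$ into $A_2$ and turns $w$ into a hub of $Y_5$ (complete to $A_5$ and $A_3$, with a neighbor $u$ in $A_2$); your fixed-cycle typing cannot see this reassignment, and no purely local repair of your lemmas will restore it.

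Two further problems. First, your lemma that ``all occurring hub vertices share one position on $C$'' is false: the paper proves only that at most three of $Y_1,\ldots,Y_5$ are nonempty (via Claims~\ref{yy1}, \ref{yy2}, \ref{ry}), and the configurations with two or three nonempty hub classes are precisely where the expansions of $G_4$, $G_6$, $G_7$ and of $G_8$--$G_{10}$ come from; moreover, because in the blown-up setting a hub need only be \emph{partially} attached to $A_{i\pm2}$, the paper requires the completeness/purity machinery (Claims~\ref{ExistPureAllpure}--\ref{yy2}) that has no analogue in your sketch, where hubs are trivially ``complete'' to single cycle vertices. Second, your type $\{v_i,v_{i+1}\}$ cannot occur at all ($u$-$v_i$-$v_{i-1}$-$v_{i-2}$-$v_{i-3}$ is an induced $P_5$), so the case split you build around its occurrence is vacuous --- a sign the classification was not checked. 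Finally, your closing paragraph concedes that the exhaustive identification of each degenerate configuration with an expansion of $G_1,\ldots,G_{10}$ or with membership in $\mathcal{H}$ is not carried out; since that case analysis (the paper's cases (A)--(D), resting on Claims~\ref{ayr}--\ref{ry}) \emph{is} the theorem, what you have is an outline whose key intermediate lemmas, as stated, are false, rather than a proof.
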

We note that  another structure theorem for ($P_5$,\,gem)-free graphs using a recursive construction is given by Brandst\"adt and Kratsch \cite{BK2005}. However, it seems difficult to use that theorem to get the bounds derived in this paper.

\section{Proof of Theorem~\ref{thm:struc}}
\label{sec:p5gem1}

Let $G$ be a connected $(P_5, gem)$-free graph.
Since $G$ contains an induced  $C_5$, there are five non-empty and pairwise
disjoint sets $A_1, ..., A_5$ such that for each $i$ modulo~$5$ the
set $A_i$ is complete to $A_{i-1}\cup A_{i+1}$ and anticomplete to
$A_{i-2}\cup A_{i+2}$.  Let $A=A_1\cup\cdots\cup A_5$.  We choose
these sets such that $A$ is maximal.  From now on every subscript is
understood modulo~$5$.  Let $R=\{x\in V(G)\setminus A \mid x$ has no
neighbor in $A\}$, and for each $i$ let:
\begin{eqnarray*}
Y_i &=& \{x\in V(G)\setminus A \mid x \mbox{ is complete to } A_i,
\mbox{ anticomplete to } A_{i-1}\cup A_{i+1}, \\
& & \mbox{ and $x$ has a neighbor in each of $A_{i-2}$ and
$A_{i+2}$, and $x$ is complete}\\
& & \mbox{ to one of $A_{i-2}$ and $A_{i+2}$}\}.
\end{eqnarray*}

\begin{claim}\label{ayr}
$V(G) = A_1\cup\cdots\cup A_5 \cup Y_1\cup\cdots\cup Y_5 \cup R$.
\end{claim}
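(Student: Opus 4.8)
The plan is to show that every vertex $x \in V(G) \setminus A$ lands in exactly one of the sets $Y_1, \ldots, Y_5$ or in $R$, by a case analysis on how $x$ attaches to the five sets $A_1, \ldots, A_5$. If $x$ has no neighbor in $A$, then $x \in R$ by definition, so I may assume $x$ has a neighbor in some $A_i$. The key structural constraints I would exploit are that $G$ is $P_5$-free and gem-free, and crucially that $A$ was chosen \emph{maximal} among all such $C_5$-expansions; this maximality is what forces $x$ to behave rigidly.

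\medskip

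First I would establish a local ``consistency'' property: for each $i$, the vertex $x$ is either complete or anticomplete to $A_i$. Indeed, suppose $x$ has a neighbor $a$ and a non-neighbor $a'$ in the same $A_i$; picking vertices $b \in A_{i+1}$ and $c \in A_{i-1}$ (adjacent to both $a$ and $a'$, and to each other only as dictated by the $C_5$), one produces a gem on $\{a, a', b, c, x\}$ or an induced $P_5$, giving a contradiction. So the attachment of $x$ is described by the subset $S = \{i : x \text{ complete to } A_i\}$ of $\{1, \ldots, 5\}$, and $x$ is anticomplete to $A_i$ for $i \notin S$ --- except I must be careful, since to a set $A_i$ the vertex $x$ could a priori be partially adjacent to \emph{some but not all} vertices while still having a neighbor there; the gem/$P_5$ argument rules this out only modulo the adjacency pattern across different $A_i$, so I would phrase the dichotomy as: $x$ has a neighbor in $A_i$ iff $x$ is complete to $A_i$, for each $i$ (using that $A_{i-1}, A_{i+1}$ are complete to $A_i$ to build the forbidden induced subgraphs).

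\medskip

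Next I would rule out the ``bad'' patterns $S$. If $x$ is complete to two consecutive sets, say $A_i$ and $A_{i+1}$, then $A_i \cup \{x\}, A_{i+1}, \ldots$ would contradict the maximality of $A$: one checks that $x$ together with $A_1, \ldots, A_5$ extends the $C_5$-structure, allowing $x$ to be absorbed into one of the $A_j$ (or to form a larger configuration), contradicting maximality of $A$. Similarly, patterns where $x$ is complete to three or more of the $A_i$, or to a single $A_i$ only with no neighbor in the two sets at distance two, must be eliminated --- these either violate $P_5$-freeness (a vertex seeing only $A_i$ yields an induced $P_5$ through the cycle) or violate maximality. The surviving case is precisely: $x$ is complete to exactly one $A_i$, anticomplete to its two neighbors $A_{i-1} \cup A_{i+1}$, and has a neighbor in each of $A_{i-2}, A_{i+2}$ with completeness to at least one of them --- which is the definition of $Y_i$. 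The completeness-to-one-of-$A_{i\pm2}$ requirement I would derive from a further gem/$P_5$ check combined with maximality.

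\medskip

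The main obstacle will be the maximality argument: translating ``$A$ is maximal'' into a usable contradiction requires showing that in each forbidden adjacency pattern one can genuinely enlarge some $A_j$ (or build a strictly larger $C_5$-expansion) while keeping all the completeness/anticompleteness conditions intact. This is delicate because adding $x$ to $A_j$ demands that $x$ have \emph{exactly} the neighborhood pattern of $A_j$ across all five classes, so I expect to need the local dichotomy from the first step applied uniformly, plus a careful verification that no new induced $P_5$ or gem is created by the enlargement. Once the dichotomy and the maximality-driven exclusion of bad patterns are in hand, the claim follows immediately since every $x \notin A$ with a neighbor in $A$ falls into exactly one $Y_i$, and every other $x$ lies in $R$.
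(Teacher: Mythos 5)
Your proposal has a genuine gap: the ``local consistency'' dichotomy in your first step --- that for each $i$, $x$ has a neighbor in $A_i$ if and only if $x$ is complete to $A_i$ --- is false, and everything downstream leans on it. The definition of $Y_i$ deliberately requires completeness to only \emph{one} of $A_{i-2}$, $A_{i+2}$, allowing $x$ to have both a neighbor and a non-neighbor in the other; such ``non-pure'' vertices genuinely occur even with $A$ maximal, and the paper devotes Claims~\ref{ExistPureAllpure}, \ref{yiai+2ai-2} and \ref{Green-yi} (where $A''_{i+2}=A_{i+2}\setminus N(Y_i)$ may be non-empty), as well as case~(A) of the structure proof, to exactly this situation --- it is what produces the expansions of $G_1,\ldots,G_6$ and $G_9$. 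Your proposed contradiction on $\{a,a',b,c,x\}$ does not materialize: take $x\in Y_1$ complete to $A_1\cup A_3$ with a neighbor $a$ and a non-neighbor $a'$ in $A_4$, and $b\in A_5$, $c\in A_3$. If $aa'\in E(G)$ one does get a $P_5$ (namely $a'$-$a$-$x$-$a_1$-$a_2$), so $aa'\notin E(G)$ is forced (this is in effect Claim~\ref{Green-yi}(i)); but then $\{a,a',b,c,x\}$ induces the house --- the $C_4$ $a$-$b$-$a'$-$c$ plus $x$ adjacent to $a$ and $c$ --- which is neither a gem nor contains an induced $P_5$. So partial attachment to $A_{i\pm2}$ cannot be ruled out, and must not be: your dichotomy would ``prove'' that every vertex of $Y_i$ is pure, which is false, and it contradicts the surviving case you yourself state at the end (which correctly asks only for ``a neighbor in each of $A_{i-2},A_{i+2}$'').

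Two secondary misattributions would also derail the case analysis. A vertex complete to two \emph{consecutive} classes $A_i,A_{i+1}$ with no neighbors elsewhere is not excluded by maximality --- there is no class it could join, since membership in $A_j$ requires completeness to $A_{j-1}\cup A_{j+1}$, and $\{i,i+1\}$ (cyclic gap $1$) is never of the form $\{j-1,j+1\}$ (cyclic gap $2$); it is excluded because $x$-$a_i$-$b_{i-1}$-$b_{i-2}$-$b_{i-3}$ is an induced $P_5$ through the three neighborless classes. Maximality is needed only for the patterns $\{i-1,i+1\}$ and $\{i-1,i,i+1\}$, where $x$ is absorbed into $A_i$; and there your anticipated ``main obstacle'' is vacuous, since $G$ itself is unchanged and already ($P_5$,\,gem)-free and the sets $A_j$ carry no internal constraints, so only the completeness/anticompleteness pattern needs checking, not freeness of any ``enlarged configuration.'' Finally, a vertex with neighbors in at least four classes is eliminated by the gem $\{x,a_i,a_{i+1},a_{i+2},a_{i+3}\}$, a case your ``$P_5$-freeness or maximality'' dichotomy never covers. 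The robust organization is the paper's: classify $x$ by the set $L$ of indices where it has \emph{at least one} neighbor, and deduce completeness only where $P_5$-freeness forces it; the cases $L=\{i\}$ or $\{i,i+1\}$ give a $P_5$, $L=\{i-1,i+1\}$ or $\{i-1,i,i+1\}$ give the maximality contradiction, $L=\{i,i-2,i+2\}$ lands $x$ in $Y_i$ (complete to $A_i$ and to one of $A_{i\pm2}$ only), and $|L|\ge4$ gives a gem.
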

\begin{proof2}
Consider any $x\in V(G)\setminus (A\cup R)$.  For each $i$ let
$a_i$ be a neighbor of $x$ in $A_i$ (if any such vertex exists) and
$b_i$ be a non-neighbor of $x$ in $A_i$ (if any exists).  Let
$L=\{i\mid a_i$ exists$\}$.  Then $L\neq\emptyset$ since $x\notin R$.
Up to symmetry there are four cases: \\
(a) $L=\{i\}$ or $\{i,i+1\}$ for some $i$.  Then
$x$-$a_i$-$b_{i-1}$-$b_{i-2}$-$b_{i-3}$ is a $P_5$, a
contradiction.  \\
(b) $L=\{i-1,i+1\}$ or $\{i-1,i,i+1\}$ for some $i$.  Then $x$ is
complete to $A_{i-1}\cup A_{i+1}$, for otherwise we find a $P_5$ as in
case~(a).  But then $x$ can be added to $A_i$, contradicting the
maximality of $A$.  \\
(c) $L=\{i, i-2, i+2\}$ for some $i$.  Then $x$ is complete to $A_i$,
for otherwise we find a $P_5$ as in case~(a), and similarly $x$ must
be complete to one of $A_{i-2}$ and $A_{i+2}$.  So $x$ is in $Y_i$.
\\
(d) $|L|\ge 4$.  Then $\{x, a_i, a_{i+1}, a_{i+2}, a_{i+3}\}$ induces
a gem for some~$i$, a contradiction.
\end{proof2}

\begin{claim}\label{ay}
For each $i$, $G[A_i]$ and
$G[Y_i]$ are $P_4$-free.
\end{claim}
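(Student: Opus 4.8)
The plan is to reduce both halves of the claim to the single observation recorded just before the theorem statement: in any gem-free graph $G$ the neighborhood $N(v)$ of every vertex $v$ induces a $P_4$-free graph. Given this, it suffices, for each of the two sets in question, to exhibit a single vertex whose neighborhood contains that set. Since an induced subgraph of a $P_4$-free graph is again $P_4$-free, this immediately yields the claim, so the whole argument is a matter of locating the right witnessing vertex.

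For $G[A_i]$ I would use the adjacency pattern of the $C_5$-structure. By hypothesis the set $A_{i+1}$ is nonempty and complete to $A_i$, so fix any $v\in A_{i+1}$. Then $A_i\subseteq N(v)$, and since $v\notin A_i$ the subgraph $G[A_i]$ is an induced subgraph of $G[N(v)]$. As $G$ is gem-free, $G[N(v)]$ is $P_4$-free, and hence so is $G[A_i]$.

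For $G[Y_i]$ I would argue symmetrically, this time embedding $Y_i$ into the neighborhood of a vertex of $A_i$. By the definition of $Y_i$, every vertex of $Y_i$ is complete to $A_i$; since $A_i$ is nonempty, fix any $a\in A_i$. Then $Y_i\subseteq N(a)$, and $a\notin Y_i$ because $A_i$ and $Y_i$ are disjoint, so $G[Y_i]$ is an induced subgraph of the $P_4$-free graph $G[N(a)]$ and is therefore $P_4$-free. (If $Y_i=\emptyset$ there is nothing to prove.)

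The claim is genuinely easy, so there is no substantial obstacle; the only points requiring any care are the two bookkeeping conditions that make the embedding legitimate, namely that the witnessing set is nonempty (so that a vertex $v\in A_{i+1}$ or $a\in A_i$ exists) and that the chosen vertex lies outside the set being bounded (so that the containment gives an induced subgraph of the open neighborhood, not merely of the closed neighborhood). Both hold because all the $A_j$ are nonempty by construction and the sets $A_i$, $A_{i+1}$, $Y_i$ are pairwise disjoint, so no further work is needed.
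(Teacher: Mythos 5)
Your proof is correct and is exactly the argument the paper intends: its one-line proof (``since $G$ is gem-free, the claim follows by the definitions of $A_i$ and $Y_i$'') implicitly invokes the same observation that in a gem-free graph every neighborhood $N(v)$ induces a $P_4$-free graph, with the witnessing vertices being any $v\in A_{i+1}$ for $A_i$ and any $a\in A_i$ for $Y_i$. You have simply made the bookkeeping (nonemptiness and disjointness) explicit, which the paper leaves to the reader.
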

\begin{proof2} Since $G$ is gem-free, the claim follows by the definitions of $A_i$ and $Y_i$.  \end{proof2}

\begin{claim}\label{yy1}
For each $i$ we have $[Y_{i-1}, Y_{i+1}]=\emptyset$.
\end{claim}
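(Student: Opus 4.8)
The plan is to argue by contradiction. Fix $i$, and suppose there exist $y\in Y_{i-1}$ and $z\in Y_{i+1}$ with $y$ adjacent to $z$; I will exhibit an induced gem, contradicting that $G$ is gem-free. The first step is simply to translate the two membership conditions into adjacency data, keeping every index modulo~$5$. Reading off the definition of $Y_{i-1}$, the vertex $y$ is complete to $A_{i-1}$, anticomplete to $A_{i-2}\cup A_i$, and has a neighbor in each of $A_{i+1}$ and $A_{i+2}$. Reading off the definition of $Y_{i+1}$, the vertex $z$ is complete to $A_{i+1}$, anticomplete to $A_i\cup A_{i+2}$, and has a neighbor in each of $A_{i-1}$ and $A_{i-2}$. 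Note that I will not need the finer ``complete to one of two sets'' clauses, only that the asserted neighbors exist.

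The second step is to select three witnesses inside $A$. Since $z$ has a neighbor in $A_{i-1}$, pick $a_{i-1}\in A_{i-1}$ adjacent to $z$; since $y$ is complete to $A_{i-1}$, automatically $y$ is adjacent to $a_{i-1}$. Since $y$ has a neighbor in $A_{i+1}$, pick $a_{i+1}\in A_{i+1}$ adjacent to $y$; since $z$ is complete to $A_{i+1}$, automatically $z$ is adjacent to $a_{i+1}$. Finally, since $y$ has a neighbor in $A_{i+2}$, pick $b_{i+2}\in A_{i+2}$ adjacent to $y$; here $z$ is anticomplete to $A_{i+2}$, so $z$ is non-adjacent to $b_{i+2}$. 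The five vertices $y,z,a_{i-1},a_{i+1},b_{i+2}$ are distinct, since $y,z\notin A$ and the three witnesses lie in pairwise distinct parts.

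The third step is to check that $a_{i-1}$-$z$-$a_{i+1}$-$b_{i+2}$ is an induced $P_4$ all of whose vertices lie in $N(y)$, which is precisely a gem with apex $y$. The three edges $a_{i-1}z$, $za_{i+1}$, $a_{i+1}b_{i+2}$ hold by the choices above together with $[A_{i+1},A_{i+2}]$ being complete. The three required non-edges are $a_{i-1}a_{i+1}$ and $a_{i-1}b_{i+2}$, both forced by the distance-$2$ anticompleteness of the $C_5$ pattern (as $A_{i-1}$ is anticomplete to both $A_{i+1}$ and $A_{i+2}$), and $zb_{i+2}$, from $z$ being anticomplete to $A_{i+2}$. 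That $y$ is adjacent to all four vertices follows from $y$ complete to $A_{i-1}$, $y$ adjacent to $z$ by assumption, and the choices of $a_{i+1}$ and $b_{i+2}$. This produces the gem and the contradiction.

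I expect the only real point of care to be the index bookkeeping: one must instantiate the two definitions correctly, so that (for instance) $y$ genuinely has a neighbor in $A_{i+2}$ while $z$ is anticomplete to $A_{i+2}$, and one must verify the three non-edges of the $P_4$ against the distance-$2$ anticompleteness relations of the pentagonal structure. A tempting alternative---trying to build an induced $P_5$ running through $y$ and $z$---tends to fail because $A_i$ is complete to both $A_{i-1}$ and $A_{i+1}$, so the edges of the $C_5$ keep closing up any would-be induced path; targeting a gem sidesteps this entirely and requires no case analysis on which of $A_{i-2},A_{i+2}$ (respectively $A_{i-1},A_{i-2}$) the vertices are complete to.
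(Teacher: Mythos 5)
Your proof is correct and matches the paper's argument exactly: the paper also picks a neighbor $a_{i-1}\in A_{i-1}$ of $z$ and neighbors $a_{i+1}\in A_{i+1}$, $a_{i+2}\in A_{i+2}$ of $y$, and derives the contradiction from the gem $\{a_{i-1},z,a_{i+1},a_{i+2},y\}$ with apex $y$, which is your gem with $a_{i+2}$ renamed $b_{i+2}$. Your write-up merely makes explicit the adjacency and distinctness checks that the paper leaves to the reader.
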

\begin{proof2}
Pick any $y\in Y_{i-1}$ and $z\in Y_{i+1}$.  We know that $y$ has neighbors $a_{i+1}\in A_{i+1}$ and
$a_{i+2}\in A_{i+2}$, and $z$ has a neighbor $a_{i-1}\in A_{i-1}$.
Then
$yz\notin E(G)$, for otherwise $\{a_{i-1}, z, a_{i+1}, a_{i+2}, y\}$ induces a gem, a contradiction.
\end{proof2}

\medskip

We say that a vertex in $Y_i$ is \emph{pure} if it is complete to
$A_{i-2}\cup A_{i+2}$, and the set $Y_i$ is \emph{pure} if every vertex in $Y_i$ is pure.

\begin{claim}\label{ExistPureAllpure}
Suppose that there exists a pure vertex in $Y_i$ for some $i$. Then $Y_i$ is pure.
\end{claim}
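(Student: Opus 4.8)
The plan is to prove the contrapositive flavor directly: assume $y\in Y_i$ is a pure vertex (complete to $A_{i-2}\cup A_{i+2}$) and let $z\in Y_i$ be arbitrary; I will show $z$ is also pure. By the definition of $Y_i$, the vertex $z$ is complete to $A_i$, anticomplete to $A_{i-1}\cup A_{i+1}$, has a neighbor in each of $A_{i-2}$ and $A_{i+2}$, and is complete to at least one of $A_{i-2}$, $A_{i+2}$. By the symmetry between the indices $i-2$ and $i+2$, it suffices to assume $z$ is complete to $A_{i+2}$ and show it must also be complete to $A_{i-2}$; suppose for contradiction that $z$ has a non-neighbor $b\in A_{i-2}$.

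The core of the argument will be to locate a forbidden induced $P_5$ or gem using $b$ together with carefully chosen vertices of $A$ and the pure vertex $y$. First I would fix a vertex $a\in A_{i+1}$ (which exists since $A_{i+1}\neq\emptyset$): note $a$ is adjacent to everything in $A_i\cup A_{i+2}$ and, crucially, nonadjacent to both $y$ and $z$ since $y,z\in Y_i$ are anticomplete to $A_{i+1}$. The pure vertex $y$ is adjacent to $b$ (as $y$ is complete to $A_{i-2}$), while $z$ is not. I would then examine the adjacency between $y$ and $z$: by Claim~\ref{ay}, $G[Y_i]$ is $P_4$-free, but more directly I can split into the two cases $yz\in E(G)$ and $yz\notin E(G)$ and produce a contradiction in each.

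The key step is the case analysis. If $yz\notin E(G)$, I would try to build a $P_5$ such as $b\text{-}y\text{-}a_{i}\text{-}z\text{-}?$ or route a path through $A_{i+2}$: since $b\in A_{i-2}$ is nonadjacent to $A_{i+2}$ and to $A_{i+1}$, and $z$ reaches $A_{i+2}$ while $y$ also reaches $A_{i+2}$, a path starting at $b$ through $y$ and continuing into $A_{i+2}$ and then to $a\in A_{i+1}$ should yield an induced $P_5$ once I check the nonadjacencies ($b$ to $A_{i+1}$, $b$ to $A_{i+2}$, and the endpoints' separation). If $yz\in E(G)$, then $y$ and $z$ together with their common and differing neighbors among $A_{i-2}\cup A_{i+2}$ and a vertex of $A_i$ or $A_{i+1}$ should force a gem: for instance, picking $c\in A_{i+2}$ complete to both $y$ and $z$, the vertex $b$ adjacent to $y$ but not $z$, a vertex $a_i\in A_i$ adjacent to both, and using $a\in A_{i+1}$, I expect one of these five-vertex sets to induce a gem.

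The main obstacle I anticipate is pinning down exactly which five vertices give the forbidden configuration, because it depends delicately on the edges among $A_{i-2}$, $A_{i+2}$, and the two $Y_i$-vertices — in particular on whether a single auxiliary vertex of $A$ can be chosen adjacent to $y,z$ in the right pattern. I would manage this by exploiting the rigid adjacency of $A$-vertices (each $A_j$ is complete to $A_{j\pm1}$ and anticomplete to $A_{j\pm2}$) to control every edge incident to the chosen $A$-vertices, and by using that $b$'s only possible neighbors among $A$ lie in $A_{i-3}\cup A_{i-1}$, keeping $b$ anticomplete to the indices I route through. Once the adjacencies are fixed, verifying that the chosen set induces exactly a $P_5$ (all three non-edges present) or a gem (the apex adjacent to all four, and the base inducing a $P_4$) is routine.
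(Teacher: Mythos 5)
There is a genuine gap, caused by a concrete error in the adjacency structure of the $C_5$-partition. You assert that $b\in A_{i-2}$ is nonadjacent to $A_{i+2}$, but indices are modulo $5$: $A_{i-2}=A_{i+3}$ is \emph{consecutive} with $A_{i+2}$ in the cycle, so $A_{i-2}$ is in fact \emph{complete} to $A_{i+2}$ (for $i=1$ these are $A_4$ and $A_3$). You even contradict yourself later, correctly noting that $b$'s neighbors in $A$ lie in $A_{i-3}\cup A_{i-1}=A_{i+2}\cup A_{i-1}$. This error kills your plan for the case $yz\notin E(G)$: a path $b$-$y$-$c$-$a$ with $c\in A_{i+2}$ and $a\in A_{i+1}$ is not induced because $bc\in E(G)$, and your alternative $b$-$y$-$a_i$-$z$ is an induced $P_4$ that cannot be extended to a $P_5$, since every natural candidate for a fifth vertex adjacent to $z$ alone (a vertex of $A_{i+2}$, or a neighbor of $z$ in $A_{i-2}$) is adjacent to the pure vertex $y$ as well. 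So the case $yz\notin E(G)$ is not closed by anything in your sketch.

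The missing ingredient is the neighbor $a'\in A_{i-2}$ of $z$, which the definition of $Y_i$ guarantees ($z$ has a neighbor in \emph{each} of $A_{i\pm2}$) and which your proposal never mentions; it is indispensable. The paper's proof (with the roles of $i-2$ and $i+2$ swapped relative to your orientation) proceeds in three steps: first, $ba'\notin E(G)$, for otherwise $b$-$a'$-$z$-$a_i$-$a_{i+1}$ is a $P_5$; second, $yz\notin E(G)$, for otherwise $\{y,z,b,c,a_i\}$ with any $c\in A_{i+2}$ induces a gem with apex $y$ over the $P_4$ $a_i$-$z$-$c$-$b$ --- note this gem \emph{requires} the edge $bc$, i.e., exactly the adjacency you got wrong; third, $\{b,y,a',z,c\}$ induces a gem with apex $c$ over the $P_4$ $b$-$y$-$a'$-$z$, the final contradiction. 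Your case $yz\in E(G)$ does essentially coincide with the paper's second step (the gem you "expect" really is there, as just described), but without $a'$ and the preliminary non-adjacency $ba'\notin E(G)$ there is no way to finish the case $yz\notin E(G)$, so as written the proof is incomplete, and the one concrete construction you propose for that case fails.
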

\begin{proof2} We may assume that $i=1$ and let $p\in Y_1$ be pure. Suppose to the contrary that there exists a vertex $y\in Y_1$ and is not pure, say $y$ has a non-neighbor $b_3\in A_3$. Moreover, by the definition of $Y_1$, $y$ has a neighbor $a_3\in A_3$.
Then $b_3a_3 \notin E(G)$,  for otherwise $b_3$-$a_3$-$y$-$a_1$-$a_5$ is a $P_5$ for any $a_1\in A_1$ and $a_5\in A_5$.
Also, for any $a_1\in A_1$ and $a_4\in A_4$, since $\{a_1,y,a_4,b_3,p\}$ does not induce a gem, we have $py\notin E(G)$. But, then for any $a_4\in A_4$, $\{b_3,p,a_3,y, a_4\}$ induces a gem,  a contradiction.
\end{proof2}

\begin{claim}\label{yiai+2ai-2}
For each $i$, we have: either $[Y_i,A_{i+2}]$ is complete or $[Y_i,A_{i-2}]$ is complete.
\end{claim}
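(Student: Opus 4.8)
The plan is to argue by contradiction, reducing to $i=1$ by the cyclic symmetry of $A_1,\dots,A_5$, so that the goal becomes: $[Y_1,A_3]$ is complete or $[Y_1,A_4]$ is complete. Assuming neither holds, I would first extract witnesses: a vertex $z\in Y_1$ with a non-neighbour $b_3\in A_3$, and a vertex $y\in Y_1$ with a non-neighbour $b_4\in A_4$. Since every vertex of $Y_1$ is, by definition of $Y_1$, complete to at least one of $A_3,A_4$, these witnesses must be distinct, and moreover $y$ is complete to $A_3$ while $z$ is complete to $A_4$; the definition of $Y_1$ also supplies a neighbour $a_3\in A_3$ of $z$ and a neighbour $a_4\in A_4$ of $y$. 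Throughout I would keep in mind the ambient adjacencies: $[A_3,A_4]$ is complete, $y$ and $z$ are complete to $A_1$ and anticomplete to $A_2\cup A_5$, and $[A_1,A_3]=[A_1,A_4]=\emptyset$. The argument then splits on whether $yz\in E(G)$.

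If $yz\in E(G)$, I would exhibit a gem directly. Fixing any $a_1\in A_1$, the four vertices induce the path $b_3$-$a_4$-$z$-$a_1$: the edges $a_4b_3$, $za_4$, $a_1z$ come from $[A_3,A_4]$ being complete and from $z$ being complete to $A_4\cup A_1$, while the non-edges $a_1a_4$, $a_1b_3$, $zb_3$ come from $[A_1,A_4]=[A_1,A_3]=\emptyset$ and from $b_3$ being a non-neighbour of $z$. Since $y$ is complete to $A_1\cup A_3$, adjacent to $z$, and adjacent to its own neighbour $a_4$, it is complete to all four path vertices, so $\{y,a_1,z,a_4,b_3\}$ induces a gem, a contradiction. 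Note this case uses neither $a_3$ nor any auxiliary non-edge.

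If $yz\notin E(G)$, I would first use $P_5$-freeness to establish the auxiliary non-edge $a_3b_3\notin E(G)$: otherwise, picking $a_5\in A_5$, the vertices $b_3$-$a_3$-$z$-$a_1$-$a_5$ would induce a $P_5$ (here $z$ is anticomplete to $A_5$, and $[A_1,A_3]=[A_3,A_5]=\emptyset$ kill the remaining potential chords). With this in hand, $b_3$-$y$-$a_3$-$z$ is an induced $P_4$: the edges use that $y$ is complete to $A_3$ and that $a_3$ is a neighbour of $z$, and the non-edges are $b_3a_3$ (just proved), $b_3z$ (non-neighbour), and $yz$ (the present case). Finally $a_4\in A_4$ is adjacent to $b_3$ and $a_3$ because $[A_3,A_4]$ is complete, to $y$ because $a_4$ is a neighbour of $y$, and to $z$ because $z$ is complete to $A_4$; hence $a_4$ is complete to $\{b_3,y,a_3,z\}$ and $\{a_4,b_3,y,a_3,z\}$ induces a gem, a contradiction. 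This proves the claim.

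The step I expect to be the main obstacle is arranging these $P_4$'s and the $P_5$ so that they actually stay induced. Because $[A_3,A_4]$ is complete, every vertex of $A_3$ is adjacent to every vertex of $A_4$ (in particular $b_3b_4\in E(G)$), so any tentative path that places an $A_3$-vertex and an $A_4$-vertex in non-consecutive positions instantly acquires a chord; many natural-looking candidate $P_5$'s and gems collapse into a $C_5$ or a paw for exactly this reason. The resolution is to route each configuration so that any $A_3$--$A_4$ pair it uses is consecutive along the path, which is precisely what forces both the case split on $yz$ and the preliminary non-edge $a_3b_3\notin E(G)$. Once those are secured, the apex in each case (namely $y$ when $yz\in E(G)$, and the neighbour $a_4$ of $y$ when $yz\notin E(G)$) dominates the chosen $P_4$ essentially for free.
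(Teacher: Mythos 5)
Your proof is correct and takes essentially the same route as the paper: the same pair of witnesses, the same $P_5$ (namely $b_3$-$a_3$-$z$-$a_1$-$a_5$) establishing $a_3b_3\notin E(G)$, and the identical final gem on $\{b_3,y,a_3,z,a_4\}$ with apex $a_4$. The only (cosmetic) difference is in disposing of the case $yz\in E(G)$: you use the gem on $\{y,b_3,a_4,z,a_1\}$ with apex $y$, while the paper rules out that edge via the gem on $\{a_1,y,a_3,b_4,z\}$ with apex $z$, invoking the non-neighbour $b_4$ that your argument never needs explicitly.
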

\begin{proof2} We may assume that $i=1$. Suppose to the contrary that there exist vertices $y_1$ and $y_2$ in $Y_1$ such that $y_1$ has a non-neighbor $b_4 \in A_4$ and $y_2$ has a non-neighbor $b_3\in A_3$. By the definition of $Y_1$, $y_1$ is complete to $A_3$, and has a neighbor $a_4 \in A_4$. Likewise, $y_2$ is complete to $A_4$, and  has a neighbor $a_3\in A_3$. Then $a_3b_3\notin E(G)$, for otherwise $b_3$-$a_3$-$y_2$-$a_1$-$a_5$ is a $P_5$ for any $a_1\in A_1$ and $a_5\in A_5$. Also, for any $a_1\in A_1$, since $\{a_1,y_1,a_3,b_4,y_2\}$ does not induce a gem, we have $y_1y_2\notin E(G)$. But, then $\{b_3,y_1,a_3, y_2,a_4\}$ induces a gem, a contradiction.
\end{proof2}

\begin{claim}\label{Green-yi}   Suppose that $[Y_i,A_{i-2}]$ is complete for some $i$. Let  $A'_{i+2}=N(Y_i)\cap A_{i+2}$ and
$A''_{i+2}= A_{i+2}\setminus A'_{i+2}$. Then (i) $[A'_{i+2}, A''_{i+2}] =\emptyset$, and (ii) $[Y_i, A'_{i+2}]$ is complete.
\end{claim}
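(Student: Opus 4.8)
By symmetry I may assume $i=1$, so the hypothesis becomes that $[Y_1,A_4]$ is complete, and I set $A'_3=N(Y_1)\cap A_3$ and $A''_3=A_3\setminus A'_3$. Throughout I would use only the defining properties of the $A_j$ and of $Y_1$: every vertex of $Y_1$ is complete to $A_1$ and (by hypothesis) to $A_4$, is anticomplete to $A_2\cup A_5$, and has a neighbor in $A_3$.

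For part (i) I would argue by contradiction. Suppose $a'\in A'_3$ and $a''\in A''_3$ are adjacent. Choose $y_1\in Y_1$ adjacent to $a'$ (possible since $a'\in N(Y_1)$); then $y_1$ is \emph{non}-adjacent to $a''$, because $a''\in A''_3$ has no neighbor in $Y_1$. Fixing any $a_4\in A_4$ and $a_2\in A_2$, the vertices $y_1$-$a_4$-$a''$-$a_2$ form an induced $P_4$ (using that $A_3$ is complete to $A_2\cup A_4$, that $Y_1$ is complete to $A_4$ but anticomplete to $A_2$, and that $A_2$ is anticomplete to $A_4$). Now $a'$ is adjacent to all four of these: to $a_4$ and $a_2$ since $A_3$ is complete to $A_2\cup A_4$, to $y_1$ by choice, and to $a''$ by the supposed edge. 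Hence $\{a',y_1,a_4,a'',a_2\}$ induces a gem, a contradiction, proving (i).

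For part (ii), suppose for contradiction that some $y\in Y_1$ is non-adjacent to some $a'\in A'_3$. Pick $y'\in Y_1$ adjacent to $a'$, a neighbor $a_3$ of $y$ in $A_3$, and arbitrary $a_1\in A_1$, $a_4\in A_4$, $a_5\in A_5$. The first step is to show $y$ and $y'$ are non-adjacent: otherwise $a_1$-$y$-$a_4$-$a'$ is an induced $P_4$ to which $y'$ is complete, giving a gem. I would then split on the adjacency between $a_3$ and $a'$. If $a_3$ is adjacent to $a'$, then $a_5$-$a_1$-$y$-$a_3$-$a'$ is an induced $P_5$ (the key point being that $a_5$ is adjacent to $a_1$ only among these vertices, since $A_5$ is anticomplete to $A_3\cup Y_1$). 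If $a_3$ is non-adjacent to $a'$, I would split once more: if $y'$ is non-adjacent to $a_3$, then $a_3$-$y$-$a_1$-$y'$-$a'$ is an induced $P_5$; if $y'$ is adjacent to $a_3$, then $y$-$a_3$-$y'$-$a'$ is an induced $P_4$ (the required non-edges being $y$ with $y'$, $y$ with $a'$, and $a_3$ with $a'$) to which $a_4$ is complete, yielding a gem. Every branch ends in a contradiction, so $y$ is adjacent to $a'$ and $[Y_1,A'_3]$ is complete.

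The verifications of the internal non-edges in each configuration are routine consequences of the $A_j$/$Y_1$ definitions; the only delicate point is part (ii). The obstacle there is that the natural four-vertex paths ($a_1$-$y$-$a_4$-$a'$ or $y$-$a_1$-$y'$-$a'$) resist extension to a $P_5$, because the sets $A_1,A_2,A_4,A_5$ and $Y_1$ are each complete to $a_1$ or to $Y_1$ and so cannot supply an endpoint-only neighbor. Overcoming this is precisely what forces the auxiliary vertex $a_3\in A_3$ and the three-way case distinction above, and the crux is recognizing which configuration collapses into a $P_5$ and which into a gem.
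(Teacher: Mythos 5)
Your proof is correct and takes essentially the same route as the paper's: in part (ii) your three branches reproduce exactly the paper's configurations (the gem on $\{a_1,y,a_4,a',y'\}$ showing $yy'\notin E(G)$, the $P_5$ $a_5$-$a_1$-$y$-$a_3$-$a'$ when $a_3a'\in E(G)$, the gem with apex $a_4$ on the path $y$-$a_3$-$y'$-$a'$, and the final $P_5$ $a_3$-$y$-$a_1$-$y'$-$a'$), merely organized as an explicit case split rather than the paper's sequential derivation of the non-edges $a_3a'$, $yy'$, $y'a_3$. The only local variation is in part (i), where the paper exhibits the $P_5$ $a''$-$a'$-$y_1$-$a_1$-$a_5$ (which does not use the hypothesis that $[Y_1,A_4]$ is complete), whereas you build a gem with apex $a'$ on $y_1$-$a_4$-$a''$-$a_2$ using that hypothesis; both arguments are valid.
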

\begin{proof2}
$(i)$: Suppose to the contrary that there are adjacent vertices $p\in A'_{i+2}$ and $q\in A''_{i+2}$. Pick a neighbor of $p$ in $Y_i$, say $y$. Clearly $yq\notin E(G)$. Then for any $a_i\in A_i$ and $a_{i-1}\in A_{i-1}$, $q$-$p$-$y$-$a_i$-$a_{i-1}$ is a $P_5$, a contradiction. This proves item $(i)$.

\noindent{$(ii)$}: Suppose to the contrary that there are non-adjacent vertices $y\in Y_i$ and $p\in A'_{i+2}$. Pick a neighbor of $p$ in $Y_i$, say $y'$. By the definition of $Y_i$, $y$ has a neighbor in $A'_{i+2}$, say $q$. Pick any $a_{i-2}\in A_{i-2}$, $a_{i-1}\in A_{i-1}$ and $a_i\in A_i$. Now, $pq\notin E(G)$, for otherwise $p$-$q$-$y$-$a_i$-$a_{i-1}$ is a $P_5$. Also, $yy'\notin E(G)$, for otherwise $\{p,a_{i-2},y,a_i,y'\}$ induces a gem. Then since $\{y,q,y',p,a_{i-2}\}$ does not induce a gem, $qy'\notin E(G)$. But then $p$-$y'$-$a_i$-$y$-$q$ is a $P_5$, a contradiction. This proves item $(ii)$.
\end{proof2}

\begin{claim}\label{yy2}
Suppose that $Y_{i-2}$ and $Y_{i+2}$ are both non-empty for
some $i$.  Let $A_i^- = N(Y_{i-2})\cap A_i$ and $A_i^+ =
N(Y_{i+2})\cap A_i$.  Then:
\begin{enumerate}[(a)]\setlength\itemsep{0em}
\item $[Y_{i-2}, Y_{i+2}]$ is complete, $A_i^-\cap A_i^+=\emptyset$, and $[A_i^-,A_i^+]=\emptyset$,
\item
$[A_i\setminus (A_i^-\cup A_i^+), A_i^-\cup A_i^+]=\emptyset$,
\item
 $[Y_{i-2}, A_{i+1}\cup
A_i^-]$ and $[Y_{i+2}, A_{i-1}\cup A_i^+]$ are complete,

\item
$Y_{i-1}\cup Y_{i+1}=\emptyset$,
\item
Any vertex in $Y_i$ is pure,
\item
One of the sets $A_i\setminus (A_i^-\cup A_i^+)$ and $Y_i$ is empty.
\end{enumerate}
\end{claim}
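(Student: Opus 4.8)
The plan is to fix $i$ and, using the symmetry of the $C_5$ on $A_1,\dots,A_5$, to assume $i=3$; I will write $A_3^-=A_i^-=N(Y_{i-2})\cap A_3$ and $A_3^+=A_i^+=N(Y_{i+2})\cap A_3$, and fix $y_1\in Y_1$ and $y_5\in Y_5$. Recall that $y_1$ is complete to $A_1$, has neighbours in $A_3$ and $A_4$, and is anticomplete to $A_2\cup A_5$, while $y_5$ is complete to $A_5$, has neighbours in $A_2$ and $A_3$, and is anticomplete to $A_1\cup A_4$; in particular $A_3^-$ and $A_3^+$ are non-empty. Each item will be obtained either by exhibiting one induced $P_5$ or one gem, or by quoting Claims~\ref{yy1}, \ref{yiai+2ai-2} and~\ref{Green-yi}. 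The items are interdependent, and I would establish them in the order (a), (d), (c), (b), (e), (f).

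For (a), I would first note that if $y_1y_5\notin E(G)$, then, choosing a neighbour $a_4\in A_4$ of $y_1$, a neighbour $a_2\in A_2$ of $y_5$, and any $a_5\in A_5$, the vertices $y_1,a_4,a_5,y_5,a_2$ induce a $P_5$ (the five non-edges other than $y_1y_5$ all follow from $y_1\perp(A_2\cup A_5)$, $y_5\perp(A_1\cup A_4)$ and $A_2\perp(A_4\cup A_5)$); hence $[Y_1,Y_5]$ is complete. Next, if some $a_3$ lies in $A_3^-\cap A_3^+$, pick $y_1,y_5$ adjacent to $a_3$ and neighbours $a_4\in A_4$ of $y_1$, $a_2\in A_2$ of $y_5$; then $a_4$--$y_1$--$y_5$--$a_2$ is an induced $P_4$ to which $a_3$ is complete, a gem, so $A_3^-\cap A_3^+=\emptyset$. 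Finally, if $p\in A_3^-$, $q\in A_3^+$ and $pq\in E(G)$, then $p$ is anticomplete to $Y_5$ by the disjointness just shown, so for $y_5$ adjacent to $q$, a neighbour $a_2\in A_2$ of $y_5$ and any $a_4\in A_4$ the path $a_4$--$p$--$a_2$--$y_5$ is induced and $q$ is complete to it, again a gem; thus $[A_3^-,A_3^+]=\emptyset$.

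I would then prove (d) before invoking (c), so that no completeness relation obstructs the witnesses. Suppose $y_2\in Y_2$ (the case $Y_4$ is symmetric); by Claim~\ref{yy1} we have $y_2y_5\notin E(G)$, and a short case analysis on whether $y_1y_2\in E(G)$ and on which of $A_4,A_5$ the vertex $y_2$ is complete to produces an induced $P_5$ in every case (for example, when $y_1y_2\notin E(G)$ and $y_2$ has a neighbour $a_5\in A_5$, the vertices $a_3,y_1,y_5,a_5,y_2$ with $a_3\in A_3^-$ induce one). For (c), Claim~\ref{yiai+2ai-2} gives that $Y_1$ is complete to $A_3$ or to $A_4$; the first alternative forces $A_3^-=A_3$, hence $A_3^+=\emptyset$ by (a), contradicting $A_3^+\neq\emptyset$, so $[Y_1,A_4]$ is complete, and symmetrically $[Y_5,A_2]$ is complete. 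Claim~\ref{Green-yi} applied to $Y_1$ (whose hypothesis $[Y_1,A_4]$ is exactly $[Y_1,A_{1-2}]$) then yields that $Y_1$ is complete to $A_3^-$ and that $[A_3^-,A_3\setminus A_3^-]=\emptyset$; the mirror statement for $Y_5$ gives the analogues for $A_3^+$. Together with (a) these deliver all of (b) and the remaining completeness assertions of (c).

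For (e), take $y_3\in Y_3$, which is anticomplete to $\{y_1,y_5\}$ by Claim~\ref{yy1}. If $y_3$ had a non-neighbour $b_1\in A_1$ and a neighbour $a_1\in A_1$, one first checks $b_1a_1\notin E(G)$ (else $b_1$--$a_1$--$y_3$--$a_3'$--$a_4$ is an induced $P_5$ for suitable $a_3'\in A_3$ and $a_4\in A_4$), and then for any $a_3\in A_3^+$ the vertices $b_1,y_1,a_1,y_3,a_3$ induce a $P_5$ (here $y_1a_3\notin E(G)$ since $a_3\notin A_3^-$); so $y_3$ is complete to $A_1$, and symmetrically to $A_5$, i.e.\ $y_3$ is pure. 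For (f), supposing $A_3\setminus(A_3^-\cup A_3^+)$ and $Y_3$ both non-empty, pick $w$ in the former, $y_3\in Y_3$, and $p\in A_3^-$ with $py_1\in E(G)$; then, using $py_5\notin E(G)$ (from (a)), the fact that $w$ is anticomplete to $Y_1\cup Y_5$, the relation $pw\notin E(G)$ (from (b)), and that $y_3$ is anticomplete to $\{y_1,y_5\}$, the vertices $y_5,y_1,p,y_3,w$ induce a $P_5$. I expect the main obstacle to be (d): unlike the other items it does not reduce to a single forbidden subgraph, and the work is to organise the case split (on $y_1y_2$ and on the alternative granted by Claim~\ref{yiai+2ai-2}) so that a $P_5$ is available in each branch, which is precisely why I would settle it before importing the completeness facts of (c).
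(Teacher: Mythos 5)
Your proposal is correct in substance and, on items (a) and (c), essentially retraces the paper: the $P_5$ $y_1$-$a_4$-$a_5$-$y_5$-$a_2$ witnessing $[Y_1,Y_5]$ complete and the gem witnessing $A_3^-\cap A_3^+=\emptyset$ are exactly the paper's configurations, and your derivation of (c) from Claims~\ref{yiai+2ai-2} and~\ref{Green-yi} (ruling out $[Y_1,A_3]$ complete because $\emptyset\neq A_3^+\subseteq A_3\setminus A_3^-$) is the paper's argument made explicit. The genuine divergences are these. For $[A_3^-,A_3^+]=\emptyset$ you use a gem with apex $q$ over the $P_4$ $a_4$-$p$-$a_2$-$y_5$, where the paper uses the $P_5$ $p$-$q$-$y_5$-$a_5$-$a_1$; both check out. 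For (b) you quote Claim~\ref{Green-yi}(i) (legitimate once (c) is in place, and in fact it gives slightly more), whereas the paper exhibits the $P_5$ $u$-$v$-$y_1$-$a_1$-$a_5$ directly, with no dependence on (c). For (f) your path $y_5$-$y_1$-$p$-$y_3$-$w$ through $p\in A_3^-$ is a nice variant that needs only $[Y_3,A_3]$ complete and (a), (b), avoiding purity, whereas the paper's path $w$-$y_3$-$a_1$-$y_1$-$y_5$ uses (e). The most substantial difference, and the one real omission in your write-up, is (d). The paper proves (d) with no new case analysis at all: $Y_2$ and $Y_1$ are again a pair of the form $(Y_{j-2},Y_{j+2})$ (take $j=4$), so if $Y_2\neq\emptyset$ the already-proved argument for (c) applies to that shifted pair and forces $[Y_1,A_3]$ to be complete, i.e.\ $A_3^-=A_3$, contradicting $A_3^+\neq\emptyset$. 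Your direct route handles the case $y_1y_2\notin E(G)$ correctly (the $P_5$ $a_3$-$y_1$-$y_5$-$a_5$-$y_2$ is induced, using (a) and Claim~\ref{yy1}), but the adjacent case $y_1y_2\in E(G)$, which you defer to ``a short case analysis,'' does not produce a $P_5$ in any obvious way --- contrary to your closing expectation --- and should instead be closed with a gem: since $[Y_1,A_3]$ complete is impossible as in your (c), Claim~\ref{yiai+2ai-2} forces $[Y_1,A_4]$ complete; then with $a_3\in N(y_1)\cap A_3$, $a_4\in N(y_2)\cap A_4$ and $a_5\in N(y_2)\cap A_5$, the vertices $a_3$-$y_1$-$y_2$-$a_5$ induce a $P_4$ (as $y_2$ is anticomplete to $A_3$, $y_1$ to $A_5$, and $A_3$ to $A_5$) and $a_4$ is complete to all four, a gem. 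With that case supplied, your ordering (a), (d), (c), (b), (e), (f) is internally consistent, since your (d) then needs only (a), Claim~\ref{yy1} and Claim~\ref{yiai+2ai-2}; what the paper's index-shift buys, and what your version gives up, is that (d) becomes a two-line corollary of (c) with no fresh forbidden-subgraph hunting at all.
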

\begin{proof2} Pick any $y\in Y_{i-2}$ and $z\in Y_{i+2}$.  So $y$ has
neighbors $a_{i-2}\in A_{i-2}$, $a_{i+1}\in A_{i+1}$ and $a_i\in A_i$,
and $z$ has neighbors $a_{i+2}\in A_{i+2}$, $a_{i-1}\in A_{i-1}$ and
$b_i\in A_i$.

\noindent{$(a)$}:   Now $yz\in E(G)$, for otherwise
$y$-$a_{i+1}$-$a_{i+2}$-$z$-$a_{i-1}$ is a  $P_5$.  Since this
holds for arbitrary $y,z$, we obtain that $[Y_{i-2}, Y_{i+2}]$ is
complete.  Then $za_i\notin E(G)$, for otherwise $\{a_i, a_{i+1}, y,
z, a_{i-1}\}$ induces a gem, and similarly $yb_i\notin E(G)$.  In
particular $a_i\neq b_i$; moreover $a_ib_i\notin E(G)$, for otherwise
$a_i$-$b_i$-$z$-$a_{i+2}$-$a_{i-2}$ is a $P_5$.  Since this
holds for any $y,z,a_i,b_i$, it proves item~(a).

\noindent{$(b)$}: Suppose that there are adjacent vertices $u\in A_i\setminus (A_i^-\cup A_i^+)$
and $v\in A_i^-\cup A_i^+$, say $v\in A_i^-$. Then $u$-$v$-$y$-$a_{i-2}$-$a_{i+2}$
is a  $P_5$, a contradiction. This proves item~(b).

 \noindent{$(c)$}:  Since $y$ and $z$ are not complete to $A_i$ (by $(a)$), by Claim~\ref{yiai+2ai-2}, $[Y_{i-2}, A_{i+1}]$ and
 $[Y_{i+2}, A_{i-1}]$ are complete. Also, by Claim~\ref{Green-yi}(ii), $[Y_{i-2}, A_i^-]$ and $[Y_{i+2}, A_i^+]$ are complete.
 This proves item~(c).

 \noindent{$(d)$}: If $Y_{i-1}\neq\emptyset$ then, by a similar argument as in the proof of $(c)$ (with subscripts
shifted by~$1$), $[Y_{i-2}, A_i]$ should be complete, which it is not.  So
$Y_{i-1}=\emptyset$, and similarly $Y_{i+1}=\emptyset$.  This proves item~(d).

 \noindent{$(e)$}: Consider any $x\in Y_i$ and suppose that it is not pure; up
to symmetry $x$ has a non-neighbor $b\in A_{i+2}$ and   is
complete to $A_{i-2}$.  By Claim~\ref{yy1} we know that $xz\notin E(G)$.
Then $a_i$-$x$-$a_{i-2}$-$b$-$z$ is a  $P_5$.  This proves
item~(e).

\noindent{$(f)$}: Suppose that there are vertices $b\in A_i\setminus (A_i^-\cup A_i^+)$ and $u\in Y_i$.
By the definition of $Y_i$, we know that $bu\in E(G)$, and by Claim~\ref{yy1}, $uy, uz\notin E(G)$. Then  by item~(c) and item~(e), for any $a_{i-2}\in A_{i-2}$,  $b$-$u$-$a_{i-2}$-$y$-$z$ is a  $P_5$, a contradiction. This proves item~(f).

\end{proof2}

\begin{claim}\label{rh}
The vertex-set of each component of $G[R]$ is a homogeneous set, and hence each component of $G[R]$ is $P_4$-free.
\end{claim}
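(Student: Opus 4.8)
The plan is to fix a component $C$ of $G[R]$ (I mean its vertex-set) and prove directly that every vertex outside $C$ that has a neighbor in $C$ is in fact complete to $C$; the $P_4$-freeness will then be an immediate consequence of gem-freeness. First I would locate where the external neighbors of $C$ can live. Since every vertex of $R$ is by definition anticomplete to $A=A_1\cup\cdots\cup A_5$, no vertex of $A$ has a neighbor in $C$. Since $C$ is (the vertex-set of) a whole component of $G[R]$, no vertex of $R\setminus C$ has a neighbor in $C$ either. Hence, by Claim~\ref{ayr}, every neighbor of $C$ lying outside $C$ belongs to $Y_1\cup\cdots\cup Y_5$. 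I would also record that $C$ must have at least one such external neighbor: otherwise $C$ would be a component of the connected graph $G$, forcing $C=V(G)$, which is impossible because $A\neq\emptyset$ and $A$ is disjoint from $R\supseteq C$.

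The key step is to rule out a vertex $y\in Y_i$ that is \emph{mixed} on $C$. Suppose $y\in Y_i$ has both a neighbor and a non-neighbor in $C$. Because $G[C]$ is connected, following a path from a neighbor to a non-neighbor produces two adjacent vertices $u,v\in C$ with $yu\in E(G)$ and $yv\notin E(G)$. Now I would exploit the fixed $C_5$ structure: pick any $a_i\in A_i$ and $a_{i+1}\in A_{i+1}$ (both non-empty). By the definition of $Y_i$, $y$ is complete to $A_i$ and anticomplete to $A_{i+1}$, so $ya_i\in E(G)$ and $ya_{i+1}\notin E(G)$; moreover $a_ia_{i+1}\in E(G)$, while $u,v\in R$ are anticomplete to $A$, so $u$ and $v$ are non-adjacent to both $a_i$ and $a_{i+1}$. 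Checking all pairs, $v$-$u$-$y$-$a_i$-$a_{i+1}$ is then an induced $P_5$, a contradiction. Thus no $y\in Y_i$ is mixed on $C$; that is, every external neighbor of $C$ is complete to $C$, which is exactly the assertion that $C$ is a homogeneous set.

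Finally, for the \emph{hence} clause I would use the external neighbor $y$ guaranteed in the first paragraph: since $C$ is homogeneous, $C\subseteq N(y)$, and in a gem-free graph $N(y)$ induces a $P_4$-free graph, so $G[C]$ is $P_4$-free. (Equivalently, a $P_4$ inside $C$ together with $y$ would induce a gem.) I expect the whole argument to be short; the only points needing care are the bookkeeping that pins the external neighbors down to $Y_1\cup\cdots\cup Y_5$ and the guarantee that a suitable adjacent pair $u,v$ exists, both of which come straight from connectivity. The genuinely load-bearing computation is merely verifying that $v$-$u$-$y$-$a_i$-$a_{i+1}$ has no chords, and I anticipate no real obstacle there.
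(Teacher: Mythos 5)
Your proposal is correct and follows essentially the same route as the paper: assume some $y$ is mixed on a component, use Claim~\ref{ayr} to place $y$ in some $Y_i$, and derive the induced $P_5$ $v$-$u$-$y$-$a_i$-$a_{i+1}$ from the definition of $Y_i$ and the anticompleteness of $R$ to $A$. Your extra care in justifying the existence of an external neighbor (via connectivity of $G$ and $A\neq\emptyset$) for the $P_4$-freeness conclusion is a point the paper handles only implicitly through its earlier remark on homogeneous sets in gem-free graphs, and it is a welcome bit of rigor rather than a different approach.
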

\begin{proof2} Suppose that a vertex-set of a component $T$ of $G[R]$ is not homogeneous.
Then there are adjacent vertices $u,t\in V(T)$ and a vertex $y\in
V(G)\setminus V(T)$ with $yu\in E(G)$ and $yt\notin E(G)$.
By~Claim~\ref{ayr} we have $y\in Y_i$ for some $i$.  Then
$t$-$u$-$y$-$a_i$-$a_{i+1}$ is a  $P_5$, for any $a_i\in A_i$ and
$a_{i+1}\in A_{i+1}$, a contradiction.
\end{proof2}

\begin{claim}\label{ry}
Suppose that there is any edge $ry$ with $r\in R$ and $y\in Y_i$. Then
$y$ is pure and $Y_{i-1}\cup Y_{i+1}=\emptyset$.  Moreover if any of
the sets $Y_{i-2}, Y_{i+2}$ is non-empty then exactly one of them is
non-empty, and $R$ is complete to that non-empty set and to $Y_i$.
\end{claim}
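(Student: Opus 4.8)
The plan is to assume $i=1$, since the other cases follow by relabelling the indices (all the definitions and earlier claims are invariant under the dihedral action on $\{1,\dots,5\}$), and then to settle the three assertions in turn, each time building an induced $P_5$ or a gem out of the edge $ry$, a few vertices of $A$, and the earlier claims. First I would prove that $y$ is pure. If not, then by the definition of $Y_1$ and up to the reflection fixing $A_1$ (which swaps $A_2\leftrightarrow A_5$ and $A_3\leftrightarrow A_4$), $y$ is complete to $A_3$ and has a non-neighbour $b_4\in A_4$; then for any $a_3\in A_3$ and $a_5\in A_5$ the vertices $r$-$y$-$a_3$-$b_4$-$a_5$ induce a $P_5$, because $r$ is anticomplete to $A$, $y$ is anticomplete to $A_5$ and misses $b_4$, and $A_3$ is anticomplete to $A_5$. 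This contradiction gives purity.

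Next I would show $Y_2=\emptyset$ (then $Y_5=\emptyset$ by the same reflection). Assume $w\in Y_2$ and pick $a_2\in A_2$, $a_3\in A_3$, and neighbours $a_4\in A_4\cap N(w)$, $a_5\in A_5\cap N(w)$, which exist by the definition of $Y_2$. The key first step is to use $r$ to force $yw\in E(G)$: if $yw,rw\notin E(G)$ then $r$-$y$-$a_3$-$a_2$-$w$ is an induced $P_5$, whereas if $rw\in E(G)$ but $yw\notin E(G)$ then $a_5$-$w$-$r$-$y$-$a_3$ is; hence $yw\in E(G)$. Now, using purity (so $y$ is complete to $A_3\cup A_4$) together with $yw\in E(G)$, the set $\{a_4,a_3,y,w,a_5\}$ induces a gem with apex $a_4$ and spine $a_3$-$y$-$w$-$a_5$, a contradiction. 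The point is that the edge $ry$ is exactly what turns the a priori possible non-edge $yw$ into an edge, which then closes the gem; without $r$ this configuration can genuinely occur, which is why the hypothesis is needed.

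For the ``moreover'' clause, suppose $Y_3\cup Y_4\neq\emptyset$. To see that $Y_3,Y_4$ are not both non-empty, observe that if they were, then by the \emph{already established} first part of this claim applied at the indices $3$ and $4$, no edge can join $R$ to $Y_3$ or to $Y_4$ (such an edge would empty $Y_4$, resp.\ $Y_3$); combining $R$ anticomplete to $Y_3\cup Y_4$ with $[Y_4,Y_3]$ complete from Claim~\ref{yy2} and $[Y_1,Y_3]=[Y_1,Y_4]=\emptyset$ from Claim~\ref{yy1}, the vertices $r$-$y$-$a_3$-$v$-$u$ (with $v\in Y_3$, $u\in Y_4$, $a_3\in A_3\cap N(y)$) induce a $P_5$. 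So exactly one of $Y_3,Y_4$ is non-empty, say $Y_3$, and it remains to prove that $R$ is complete to $Y_1\cup Y_3$. The core consists of two facts, each proved by producing an induced $P_5$ whenever the edge in question is absent: (A) any $t\in R$ with a neighbour $w'\in Y_1$ is complete to $Y_3$ (else $t$-$w'$-$a_3$-$v$-$a_5$ is a $P_5$, with $a_3\in A_3\cap N(w')$, $v\in Y_3$, $a_5\in A_5\cap N(v)$), and (B) such a $t$ is then complete to $Y_1$ (the relevant $P_5$ being $a_4'$-$y'$-$a_1$-$v$-$t$, where $y'\in Y_1$, $a_1\in A_1\cap N(v)$ and $a_4'\in A_4\cap N(y')$). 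The mirror statements, starting from a $Y_3$-neighbour, follow from the reflection fixing $A_2$, which interchanges $Y_1$ and $Y_3$ while preserving the emptiness pattern. Finally, to pass from the single vertex $r$ to all of $R$, I would invoke Claim~\ref{rh}: each component of $G[R]$ is homogeneous, and since $G$ is connected with $Y_2=Y_4=Y_5=\emptyset$, every component of $G[R]$ has a neighbour in $Y_1\cup Y_3$; the facts (A),(B) and their mirrors then force every component to be complete to all of $Y_1\cup Y_3$.

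I expect the main obstacle to be this last propagation. The individual $P_5$ and gem computations are routine once the correct four auxiliary vertices are selected, but assembling them into ``$R$ complete to $Y_1\cup Y_3$'' requires tracking which of $Y_1,Y_3$ a given component of $G[R]$ attaches to and then exploiting homogeneity; and the crux $P_5$ in~(B) must be routed through the $A_4$-neighbour $a_4'$ rather than through an $A_1$-vertex precisely so that it remains valid whether or not $v$ is complete to $A_1$.
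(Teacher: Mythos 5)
Your proposal is correct and takes essentially the same route as the paper: the purity argument, the forcing of edges from an $R$-vertex to the opposite $Y$-set (your facts (A) and (B) are exactly the paper's step forcing $rz\in E(G)$ and its mirror ``by symmetry''), and the final propagation to all of $R$ via the homogeneity of components of $G[R]$ (Claim~\ref{rh}) together with connectivity. Your only local deviations --- re-proving $Y_{i-1}\cup Y_{i+1}=\emptyset$ directly by forcing $yw\in E(G)$ and exhibiting a gem where the paper derives it from purity via Claim~\ref{yy2}, and obtaining ``exactly one of $Y_{i-2},Y_{i+2}$ non-empty'' from $R$ being anticomplete to both plus a $P_5$ where the paper forces $rz\in E(G)$ and reapplies the first part at index $i+2$ --- are both valid and do not change the overall structure of the argument.
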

\begin{proof2}
Consider any edge $ry$ with $r\in R$ and $y\in Y_i$.  So $y$
has a neighbor $a_j\in A_j$ for each $j\in\{i, i-2, i+2\}$.  If $y$ is
not pure, then up to symmetry $y$ has a non-neighbor $b\in A_{i-2}$,
and then $r$-$y$-$a_i$-$a_{i-1}$-$b$ is a  $P_5$ for any
$a_{i-1}\in A_{i-1}$, a contradiction.  So $y$ is pure, and
by~Claim~\ref{yy2} $Y_{i-1}\cup Y_{i+1}=\emptyset$.  Now suppose up to
symmetry that there is a vertex $z\in Y_{i+2}$.  By~Claim~\ref{yy1} we
have $yz\notin E(G)$.  Then $rz\in E(G)$, for otherwise
$r$-$y$-$a_{i+2}$-$z$-$a_{i-1}$ is a  $P_5$, for any
$a_{i-1}\in A_{i-1}\cap N(z)$.  Now by the same argument as above, $z$
is pure, and by~Claim~\ref{yy2} $Y_{i+1}\cup Y_{i+3}=\emptyset$.
Since this holds for any $z$, the vertex $r$ is complete to $Y_{i+2}$,
and then by symmetry $r$ is complete to $Y_i$; and by Claim~\ref{rh} and
the fact that $G$ is connected, $R$ is complete to $Y_i\cup Y_{i+2}$.
\end{proof2}

\medskip

It follows from the preceding claims that at most three of the sets $Y_1,...,Y_5$ are non-empty,
and   if $R\neq\emptyset$ then at most two of $Y_1, ..., Y_5$ are non-empty.  Hence we
have the following cases:
\begin{enumerate}[(A)]\setlength\itemsep{0em}

\item
$R=\emptyset$ and $Y_2\cup Y_3\cup Y_5=\emptyset$. Any of $Y_1, Y_4$ may
be non-empty.\\
We may assume that both $Y_1$ and $Y_4$ are not empty, $[Y_1, A_3]$ is complete and $[Y_4, A_2]$ is complete. (Otherwise, using Claims~~\ref{ay}, \ref{yiai+2ai-2} and \ref{Green-yi}, it follows that $G$ is a   $P_4$-free expansion of either $G_1,G_2,\ldots, G_6$ or $G_9$.)
Suppose there exists $y_1\in Y_1$ that has a non-neighbor $a_4 \in A_4$, and there exists $y_4\in Y_4$ that has a non-neighbor $a_1\in A_1$, then for any $a_3\in A_3$, $a_1$-$y_1$-$a_3$-$a_4$-$y_4$ is a $P_5$ in $G$, a contradiction. So either  $Y_1$ is pure or  $Y_4$ is pure.
Then by Claims~\ref{ay},~\ref{yiai+2ai-2} and \ref{Green-yi}, we see that   $G$ is a   $P_4$-free expansion of  $G_4$ or $G_6$.
\item
$R=\emptyset$ and $Y_2, Y_3$ are both non-empty.\\
 Then
Claims~\ref{ay} and \ref{yy2} implies that $G$ is a $P_4$-free expansion of either $G_8$, $G_9$ or $G_{10}$.
\item
$R\neq\emptyset$ and exactly one of $Y_1, ..., Y_5$ is non-empty, say $Y_1$ is non-empty.\\ In this case, we show that $G\in \cal{H}$ as follows: Since $R\neq \emptyset$, there exists a vertex $r\in R$ and a vertex
$y\in Y_1$ such that $ry \in E(G)$. Then by Claim~\ref{ry}, $y$ is a pure vertex of $Y_1$. So, by Claim~\ref{ExistPureAllpure}, $Y_1$ is pure, and hence by Claims~\ref{ay} and \ref{rh}, we see that $G\in \cal{H}$.
\item
$R\neq\emptyset$ and exactly two of $Y_1, ..., Y_5$ are non-empty.\\
In this case, by Claims~\ref{rh}
and~\ref{ry} and up to symmetry we may assume that $Y_1$ and $Y_4$
are non-empty, all vertices in $Y_1\cup Y_4$ are pure, and $[R,
Y_1\cup Y_4]$ is complete. Moreover, since $G$ is gem-free,  $G[R]$ is $P_4$-free. So by Claim~\ref{ay}, $G$ is a $P_4$-free expansion  of $G_7$.

\end{enumerate}
This completes the proof of  Theorem~\ref{thm:struc}. \hfill{$\Box$}

\section{Bounding the chromatic number}

 A
\emph{stable set} is a set of pairwise non-adjacent vertices.
  We say that two sets \emph{meet} if their
intersection is not empty.
In a graph $G$, we say that a stable set is \emph{good} if it meets
every clique of size $\omega(G)$. Moreover, we say that a clique $K$ in $G$ is a \emph{$t$-clique} of $G$ if $|K|=t$.

We  use the following theorem often.
\begin{theorem}[\cite{KM-arxiv}]\label{thm:tools}
Let $G$ be a graph such that every proper induced subgraph $G'$ of $G$
satisfies $\chi(G')\le \lceil \frac{5}{4}\omega(G')\rceil$.  Suppose
that one of the following occurs:

\begin{enumerate}[(i)]\itemsep=1pt
\item\label{degq}
$G$ has a vertex of degree at most
$\lceil\frac{5}{4}\omega(G)\rceil-1$.
\item\label{goods}
$G$ has a good stable set.
\item\label{stable}
$G$ has a stable set $S$ such that $G\sm S$ is perfect.
\item\label{fives}
For some integer $t\ge 5$ the graph $G$ has $t$ stable sets
$S_1,\ldots,S_t$ such that $\omega(G\sm (S_1\cup\cdots\cup S_t))\le
\omega(G)-(t-1)$.
\end{enumerate}
Then $\chi(G)\le \lceil \frac{5}{4}\omega(G)\rceil$.
\end{theorem}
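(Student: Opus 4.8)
The plan is to read the hypothesis ``every proper induced subgraph $G'$ satisfies $\chi(G')\le\lceil\frac{5}{4}\omega(G')\rceil$'' as an inductive hypothesis, and to verify each of the four conditions (i)--(iv) separately, in every case exhibiting a coloring of $G$ with at most $\lceil\frac{5}{4}\omega(G)\rceil$ colors. Throughout I write $\omega=\omega(G)$ and $g(n)=\lceil\frac{5n}{4}\rceil$, and I would first record the two elementary arithmetic facts that drive all four cases: $g$ is strictly increasing with $g(n)-g(n-1)\in\{1,2\}$, and $g(n)-g(n-4)=5$ for every $n$ (both immediate on writing $n=4k+r$, the second because subtracting the integer $5$ commutes with the ceiling).

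For condition (i) I would run the standard degeneracy argument: if $v$ has degree at most $g(\omega)-1$, color $G-v$ with at most $g(\omega(G-v))\le g(\omega)$ colors by the inductive hypothesis, and then extend the coloring to $v$, which sees at most $g(\omega)-1$ colors on its neighbors and hence has a free color.

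Conditions (ii) and (iv) share a single template: delete a union $S$ of stable sets, note that $\omega(G\setminus S)$ drops by a controlled amount, color $G\setminus S$ by the inductive hypothesis, and spend one extra color per deleted stable set. In case (ii) a good stable set $S$ meets every $\omega$-clique, so every maximum clique loses a vertex and $\omega(G\setminus S)\le\omega-1$; thus $\chi(G)\le g(\omega-1)+1\le g(\omega)$ by strict monotonicity. In case (iv), with $S=S_1\cup\cdots\cup S_t$ we have $\omega(G\setminus S)\le\omega-(t-1)$ and hence $\chi(G)\le g(\omega-(t-1))+t$; the crux is the inequality $g(\omega)-g(\omega-(t-1))\ge t$, which holds exactly because $t\ge 5$. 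The clean explanation is the continuous estimate $\frac{5}{4}(t-1)\ge t\iff t\ge 5$, matching the $\frac54$ in the statement; to make it rigorous for the ceiling I would write $t-1=4q+s$ with $q\ge 1$ and $s\in\{0,1,2,3\}$, so that $g(\omega)-g(\omega-(t-1))\ge 5q+s\ge 4q+s+1=t$ using the two arithmetic facts.

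Condition (iii) is slightly different because it uses perfection rather than induction: if $G\setminus S$ is perfect then $\chi(G\setminus S)=\omega(G\setminus S)\le\omega$, so spending one color on the stable set $S$ gives $\chi(G)\le\omega+1$, and one checks $\omega+1\le g(\omega)$ for all $\omega\ge 1$ (with equality for $1\le\omega\le4$). The only real obstacle I anticipate is the careful bookkeeping of these ceiling inequalities --- especially $g(\omega-(t-1))+t\le g(\omega)$ in case (iv) --- together with the trivial degenerate cases where a deleted set is empty or equals $V(G)$ (so that the inductive hypothesis is applied to, or replaced by, an empty remainder); all of these are routine once the facts $g(n)-g(n-1)\in\{1,2\}$ and $g(n)-g(n-4)=5$ are in place.
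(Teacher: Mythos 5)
Your proof is correct and is essentially the standard argument: the paper itself does not prove this theorem but imports it from \cite{KM-arxiv}, and the proof there runs exactly as you describe --- treat the hypothesis as an inductive hypothesis, spend one fresh color per deleted stable set, use perfection (rather than induction) in case (iii), and close with the ceiling arithmetic $\lceil\frac{5}{4}n\rceil-\lceil\frac{5}{4}(n-4)\rceil=5$ and $\lceil\frac{5}{4}n\rceil-\lceil\frac{5}{4}(n-1)\rceil\in\{1,2\}$, which gives $\lceil\frac{5}{4}\omega\rceil-\lceil\frac{5}{4}(\omega-(t-1))\rceil\ge t$ precisely because $t\ge 5$. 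The only cosmetic point worth making explicit is that in case (iv) the stable sets $S_1,\ldots,S_t$ need not be disjoint, so before assigning one new color to each you should replace $S_i$ by $S_i\setminus(S_1\cup\cdots\cup S_{i-1})$ (still stable, same union); this, together with the observation that the hypothesis of (iv) forces $\omega(G)\ge t-1$ so that $\lceil\frac{5}{4}(\omega-(t-1))\rceil$ is well defined, falls under the routine bookkeeping you already flagged.
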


Given a graph $G$ and a proper homogeneous set $X$ in $G$, let $G/X$
be the graph obtained by replacing $X$ with a clique $Q$ of size
$\omega(X)$ (i.e., $G/X$ is obtained from $G\setminus X$ and $Q$ by
adding all edges between $Q$ and the vertices of $V(G)\setminus X$
that are adjacent to $X$ in $G$).

\begin{lemma}[\cite{KM2018}]\label{lem:red}
In a graph $G$ let $X$ be a proper homogeneous set such that $G[X]$ is
$P_4$-free.  Then $\omega(G)=\omega(G/X)$ and  $\chi(G)=\chi(G/X)$.  Moreover, $G$ has a good stable set if
and only if $G/X$ has a good stable set.
\end{lemma}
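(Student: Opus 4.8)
The plan is to set up notation around $X$, describe the maximum cliques of both graphs, and then verify the three assertions in order, with the good-stable-set equivalence being the delicate step.

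\emph{Setup.} Write $N=N(X)$ and $M=V(G)\setminus(X\cup N)$. Since $X$ is homogeneous, every vertex of $N$ is complete to $X$ and every vertex of $M$ is anticomplete to $X$; moreover $G\setminus X$ is exactly the graph $(G/X)\setminus Q$ on $N\cup M$, with $Q$ complete to $N$ and anticomplete to $M$. Because $G[X]$ is $P_4$-free it is perfect, so with $t:=\omega(X)=|Q|$ we have $\chi(G[X])=\omega(G[X])=t$, and $G[X]$ contains a $t$-clique. The first thing I would establish is a description of the maximum cliques. Writing $\omega:=\omega(G)$, I would show that every maximum clique of $G$ is either (a) a clique contained in $N\cup M$, or (b) of the form $S_X\cup K_N$ with $S_X$ a $t$-clique of $G[X]$ and $K_N$ a $(\omega-t)$-clique of $N$: the swap argument (replacing $K\cap X$ by a maximum clique of $G[X]$, still complete to $K\cap N\subseteq N$) forces $|K\cap X|=t$ whenever $K$ meets $X$, and homogeneity forces $K\setminus X\subseteq N$. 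Conversely, every $(\omega-t)$-clique of $N$ paired with any $t$-clique of $X$ is a type-(b) clique. The analogous list for $G/X$ is (a) the same, and (b$'$) the cliques $Q\cup K_N$ with $K_N$ a $(\omega-t)$-clique of $N$.

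\emph{Clique and chromatic number.} With these descriptions $\omega(G)=\omega(G/X)$ is immediate, since types (b) and (b$'$) both have size $t+(\omega-t)$ and type (a) is common to both graphs. For $\chi$ I would prove both inequalities by transferring colorings across the boundary $N$. Given an optimal coloring of $G$, the $t$ colors appearing on any $t$-clique of $X$ are disjoint from the colors on $N$ (as $N$ is complete to $X$), so recoloring $Q$ with those $t$ colors yields $\chi(G/X)\le\chi(G)$. Conversely, given an optimal coloring of $G/X$, the $t$ colors on $Q$ avoid $N$; since $G[X]$ is perfect it is $t$-colorable, so I may color $G[X]$ using exactly those $t$ colors (legal against $N$, and irrelevant to $M$), giving $\chi(G)\le\chi(G/X)$. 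This is the only place where perfectness of $G[X]$ is genuinely needed.

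\emph{Good stable sets.} This is the heart of the lemma and the step I expect to be the main obstacle, because the many maximum cliques of $G[X]$ all ``collapse'' onto the single clique $Q$. The tool that resolves this is that a perfect graph has a stable set meeting all its maximum cliques: in an optimal $t$-coloring of $G[X]$, each color class has one vertex in every $t$-clique, hence is a good stable set $S_X^{\ast}$ of $G[X]$. For the direction $G/X\Rightarrow G$, let $S'$ be good in $G/X$. If $S'\cap Q\ne\emptyset$, then $S'\cap N=\emptyset$ (since $Q$ is complete to $N$), and I take $S:=S_X^{\ast}\cup(S'\cap M)$, which meets every type-(b) clique through $S_X^{\ast}$ and every type-(a) clique through $S'\cap M$. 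If $S'\cap Q=\emptyset$, then $S'\subseteq N\cup M$ and, being good, meets every $(\omega-t)$-clique of $N$; hence $S:=S'$ meets every type-(a) and (through that $N$-part) every type-(b) clique of $G$.

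For the converse, let $S$ be good in $G$. If $S\cap X\ne\emptyset$ then $S\cap N=\emptyset$ (a neighbor of an $X$-vertex lies in $N$ and is complete to $X$), and I take $S':=\{q\}\cup(S\cap M)$ for any $q\in Q$: it meets every type-(b$'$) clique via $q$, and every type-(a) clique via $S\cap M$, which must meet all type-(a) cliques because $S$ does and $S\cap N=\emptyset$. If $S\cap X=\emptyset$, then $S\subseteq N\cup M$ meets every $(\omega-t)$-clique of $N$, so $S':=S$ works in $G/X$. I would finally treat the degenerate case $\omega=t$ separately, where $Q$ itself is a maximum clique; there every good stable set is forced to meet $Q$ (respectively $X$), and the arguments simplify accordingly.
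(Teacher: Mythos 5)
Your proof is correct, and it is the natural argument: classify the maximum cliques of $G$ and $G/X$ via the swap argument, transfer colorings across $N(X)$ using the perfection ($t$-colorability) of $G[X]$, and use a color class of an optimal $t$-coloring of $G[X]$ as the good stable set inside $X$. Note that this paper does not prove the lemma at all --- it is imported from \cite{KM2018} --- and your argument matches the standard proof given there, so there is nothing further to reconcile; your closing remark about the degenerate case $\omega(G)=\omega(X)$ is in fact already handled by your general case analysis, since a good stable set is then forced to meet $X$ (respectively $Q$), making the problematic subcase vacuous.
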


 For $k \in \{1, 2, \ldots, 10\}$, let ${\cal G}_k$ be the class of
graphs that are $P_4$-free expansions of $G_k$, and let ${\cal G}_k^*$
be the class of graphs that are clique expansions of $G_k$. Let $\cal{H}^*$ be the class of graphs $G \in \cal{H}$ such that, with the notation as in Section~1, the five sets $A_1,A_2,\ldots, A_5$, and the vertex-set of each component of $G[A_7]$ are cliques.

The following lemma can be proved using Lemma~\ref{lem:red}, and the proof is very similar to that of Lemma~3.3  of \cite{KM2018}, so we omit the details.

\begin{lemma} \label{lem:red2}
For every graph $G$ in ${\cal G}_i$ ($i\in\{1, \ldots, 10\}$) (resp. $G$ in $\cal{H}$)
 there is a graph $G^*$ in ${\cal G}_i^*$
($i\in\{1, \ldots, 10\}$) (resp. $G^*$ in $\cal{H}^*$) such that
$\omega(G)=\omega(G^*)$ and $\chi(G)=\chi(G^*)$. Moreover, $G$ has a good stable set if and
only if $G^*$ has a good stable set.
\end{lemma}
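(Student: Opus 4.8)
The plan is to reduce an arbitrary $P_4$-free expansion to a clique expansion by repeatedly contracting homogeneous sets, invoking Lemma~\ref{lem:red} at each step to preserve the relevant invariants. First I would observe that in any $P_4$-free expansion $G$ of $G_k$, each bag $Q_v$ (for $v\in V(G_k)$) is itself a homogeneous set of $G$: by the definition of expansion, every vertex outside $Q_v$ lying in a bag $Q_u$ is either complete to $Q_v$ (if $uv\in E(G_k)$) or anticomplete to it (if $uv\notin E(G_k)$), so no vertex can have a partial neighborhood in $Q_v$. Since $G[Q_v]$ is $P_4$-free by hypothesis, Lemma~\ref{lem:red} applies with $X=Q_v$, and contracting $X$ to a clique of size $\omega(G[Q_v])$ yields $G/Q_v$ with $\omega(G/Q_v)=\omega(G)$, $\chi(G/Q_v)=\chi(G)$, and the same good-stable-set status.

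The key point to verify next is that after contracting one bag, the ambient structure is undisturbed: $G/Q_v$ is still an expansion of $G_k$, now with a clique in place of $Q_v$ and with all the other bags still inducing $P_4$-free graphs and still homogeneous. This is essentially immediate because contracting $Q_v$ only alters that one bag and the edges incident to it, leaving the adjacency pattern between the remaining bags exactly as prescribed by $G_k$. I would then iterate over all $v\in V(G_k)$; since $V(G_k)$ is finite (indeed $|V(G_k)|\le 10$), after finitely many applications of Lemma~\ref{lem:red} every bag has become a clique, producing the desired $G^*\in{\cal G}_k^*$. Transitivity of the three preserved invariants ($\omega$, $\chi$, and existence of a good stable set) through the chain of contractions gives the statement for the ${\cal G}_i$ cases.

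For the case $G\in{\cal H}$, I would argue identically, treating the seven sets $A_1,\dots,A_7$ in the definition of ${\cal H}$. Here $A_1,\dots,A_5$ behave like expansion bags with the complete/anticomplete adjacencies specified in the bullet list, so each is homogeneous and $P_4$-free, and I would contract each in turn to a clique of size $\omega(G[A_i])$. The set $A_7$ is handled componentwise: by hypothesis the vertex-set of each component of $G[A_7]$ is a homogeneous set of $G$ (and is $P_4$-free, being a homogeneous set in a gem-free graph), so I would contract each such component to a clique. I would need to confirm that after these contractions the resulting graph still lies in ${\cal H}$ and that its $A_1,\dots,A_5$ and the components of $A_7$ are cliques, i.e.\ it lies in ${\cal H}^*$; as before each contraction is local and does not change the global adjacency constraints defining ${\cal H}$. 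Since $A_6$ is left untouched and its adjacency to $A_7$ is unspecified but ${\cal H}^*$ imposes no cliqueness on $A_6$, nothing more is required there.

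The main obstacle I anticipate is the bookkeeping showing that each contraction genuinely leaves us inside the same structural class rather than the structure-preservation itself, which is routine. In particular, for the ${\cal H}$ case one must be slightly careful that contracting a component of $A_7$ does not inadvertently merge components or violate the homogeneous-set condition on the remaining components; but since distinct components of $G[A_7]$ are nonadjacent within $A_7$ and each has a prescribed (homogeneous) relationship to the rest of $G$, replacing one by a clique preserves both the anticomplete relations among $A_7$-components and the homogeneity of the others. This is exactly the verification carried out in Lemma~3.3 of \cite{KM2018}, which is why the authors state that the present proof is very similar and omit the details.
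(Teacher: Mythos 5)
Your proposal is correct and takes essentially the same approach as the paper: the authors omit the details but indicate precisely this argument, namely iterated application of Lemma~\ref{lem:red} to the homogeneous, $P_4$-free bags $Q_v$ (resp.\ to $A_1,\dots,A_5$ and the components of $G[A_7]$, with $A_6$ untouched), following the proof of Lemma~3.3 of \cite{KM2018}. Your structure-preservation checks are sound, and indeed they can be shortened by noting that $G/X$ is isomorphic to the induced subgraph of $G$ obtained by replacing $X$ with a maximum clique of $G[X]$, which immediately preserves connectedness, ($P_5$,\,gem)-freeness, and the defining partition of the class.
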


By Lemma~\ref{lem:red2} and Theorem~\ref{thm:struc}, to prove Theorem~\ref{thm:54bound}, it suffices to consider the clique expansions of $G_1,G_2\ldots,G_{10}$ and the members of $\cal{H^*}$.

\subsection{Coloring clique expansions}
\begin{theorem}\label{thm:54-G1-G6}
Let $G$ be a clique expansion of either $G_1,\ldots, G_5$ or $G_6$, and assume that every induced subgraph
$G'$ of $G$ satisfies $\chi(G')\le \lceil\frac{5}{4}\omega(G')\rceil$.
Then $\chi(G)\le \lceil\frac{5}{4}\omega(G)\rceil$.
\end{theorem}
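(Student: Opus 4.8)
The plan is to combine Lemma~\ref{lem:red2} with a case analysis over the six basic graphs, invoking the four sufficient conditions of Theorem~\ref{thm:tools}. First I would use Lemma~\ref{lem:red2} to assume $G$ is a clique expansion, so $V(G)$ partitions into cliques $Q_v$ ($v\in V(G_k)$) with $[Q_u,Q_v]$ complete exactly when $uv\in E(G_k)$ and empty otherwise; write $q_v=|Q_v|$. Recall from the proof of Theorem~\ref{thm:struc} (case~(A)) that each of $G_1,\dots,G_6$ consists of the five-cycle on classes $a_1,\dots,a_5$, possibly augmented by a bounded number of extra vertices of $Y_1$/$Y_4$ type, each complete to two or three of the $a_j$'s and anticomplete to the rest. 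The proof then splits into the pure cycle $G_1=C_5$ and the augmented graphs $G_2,\dots,G_6$.

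For $G_1=C_5$ the graph $G$ is a clique blow-up of $C_5$ with $\omega:=\omega(G)=\max_i(q_{a_i}+q_{a_{i+1}})$. Here no good stable set need exist, since the balanced blow-up is exactly the extremal example, so the main tool is Theorem~\ref{thm:tools}(iv) with $t=5$. I would take the five ``diagonal'' stable sets $S_j=\{x_j,z_j\}$ with $x_j\in Q_{a_j}$ and $z_j\in Q_{a_{j+2}}$, chosen so that from every class of size at least $2$ two distinct vertices are deleted. A short count shows each consecutive pair $Q_{a_i}\cup Q_{a_{i+1}}$ is met by four of the five sets, hence loses $4$ vertices, so $\omega(G\setminus\bigcup_jS_j)\le\omega-4$; Theorem~\ref{thm:tools}(iv) then yields $\chi(G)\le\lceil\tfrac54\omega\rceil$.

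The main obstacle is that this deletion drops the clique number by only $3$ in the degenerate configuration where a size-$1$ class sits opposite another size-$1$ class across a large class, forcing two diagonals to coincide. For such configurations I would argue separately: either the small class gives a vertex of degree at most $\lceil\tfrac54\omega\rceil-1$ (Theorem~\ref{thm:tools}(i)), or I would fall back on the chromatic formula for a $C_5$ blow-up, $\chi(G)=\max\bigl(\omega,\lceil\tfrac12\sum_i q_{a_i}\rceil\bigr)$, together with $\sum_i q_{a_i}\le\tfrac52\omega$ (from $2\sum_i q_{a_i}=\sum_i(q_{a_i}+q_{a_{i+1}})\le5\omega$), which already gives $\lceil\tfrac12\sum_i q_{a_i}\rceil\le\lceil\tfrac54\omega\rceil$. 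Reconciling the clean argument via Theorem~\ref{thm:tools}(iv) with these boundary cases is where most of the bookkeeping lives.

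For $G_2,\dots,G_6$ the added vertices break the symmetry of the cycle, and I expect a good stable set (Theorem~\ref{thm:tools}(ii)) to exist in each case: since every maximum clique then contains a vertex from a fixed small collection of classes (the triangles through an added vertex, together with the heaviest cycle edges), I would exhibit an explicit stable set---typically built from the classes $a_2,a_5$ that are anticomplete to the added vertices, augmented by a diagonal partner---that meets all of them. When an added vertex lies in every maximum clique, I would instead delete a stable set $S$ for which $G\setminus S$ is $C_5$-free and hence perfect, applying Theorem~\ref{thm:tools}(iii). In all cases Theorem~\ref{thm:tools} then gives $\chi(G)\le\lceil\tfrac54\omega(G)\rceil$, completing the case analysis.
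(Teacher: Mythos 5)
The core of your plan breaks down on $G_3$. For $G_2,\dots,G_6$ you propose to use only Theorem~\ref{thm:tools}(\ref{goods}) (a good stable set) or Theorem~\ref{thm:tools}(\ref{stable}) (delete a stable set leaving a perfect graph), but for $G_3$ there is a configuration where neither applies: when $q$ is even and $|Q_1|=|Q_2|=|Q_4|=|Q_5|=|Q_7|=\frac{q}{2}\ge 2$, the $q$-cliques $Q_1\cup Q_5$, $Q_4\cup Q_5$, $Q_1\cup Q_2$, $Q_2\cup Q_7$, $Q_4\cup Q_7$ are the consecutive pairs of a five-cycle pattern on the classes $1,2,7,4,5$; any stable set induces an independent set in that $C_5$ of classes and therefore misses at least one of the five pairs, so no good stable set exists, and since every class has size at least $2$, removing any stable set still leaves a blow-up of $C_5$, so $G\setminus S$ is never perfect. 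The paper is forced here into Theorem~\ref{thm:tools}(\ref{fives}), exhibiting five explicit stable sets whose union meets every $q$-clique four times; your sketch contains no substitute for this. You are also missing the coloring-extension reductions that the paper uses throughout (e.g.\ if $|Q_5|\le |Q_6|$, color $G\setminus Q_5$ by induction and reuse the colors of $Q_6$ on $Q_5$, since $N(Q_5)=Q_1\cup Q_4$ is complete to $Q_6$); without first assuming $|Q_5|>|Q_7|$ and $|Q_5|>|Q_6|$ your candidate good stable sets for $G_4$, $G_5$, $G_6$ are simply not good (e.g.\ $\{x_2,x_5,x_6\}$ can miss a $q$-clique inside $Q_4\cup Q_7$ or $Q_1\cup Q_6$), and for $G_6$ even after these reductions the paper needs a counting contradiction to show $Q_4\cup Q_7$ is not a $q$-clique before a good stable set appears.

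Your $G_1$ treatment is closer to workable but has a concrete error: in the degenerate case with a singleton class, your first fallback (a low-degree vertex) fails, since a vertex in a singleton class can have degree $2\omega-2$ (both neighboring classes of size $\omega-1$), which exceeds $\lceil\frac{5}{4}\omega\rceil-1$ already for $\omega\ge 4$. Your second fallback, the weighted-$C_5$ formula $\chi(G)=\max\bigl(\omega,\lceil\frac{1}{2}\sum_i q_{a_i}\rceil\bigr)$, is true and would in fact dispose of all of $G_1$ by itself, but you assert it without proof or reference, which is a gap in a self-contained argument. The paper's route is cleaner on both counts: if some class is a singleton $\{x\}$, then $G\setminus\{x\}$ is a clique expansion of $P_4$, hence perfect, and Theorem~\ref{thm:tools}(\ref{stable}) applies; otherwise take $X$ consisting of two vertices from each class, note $\chi(G[X])=5$ and $\omega(G\setminus X)=q-4$, and apply the inductive hypothesis to $G\setminus X$, giving $\chi(G)\le\lceil\frac{5}{4}(q-4)\rceil+5=\lceil\frac{5}{4}q\rceil$.
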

\begin{proof}
Throughout the proof of this theorem, we use the following notation:
 Let $q=\omega(G)$. Suppose that $G$ is a  clique expansion of $H\in \{G_1,\ldots,G_6\}$.
 So there is a partition of $V(G)$ into $|V(H)|$ non-empty cliques $Q_1, \ldots, Q_{|V(H)|}$, where $Q_i$ corresponds to
the vertex $x_i$ of $H$.  We write, e.g., $Q_{12}$ instead of
$Q_1\cup Q_2$, $Q_{123}$ instead of $Q_1\cup Q_2\cup Q_3$, etc.  For each
$i\in\{1,\ldots,|V(H)|\}$ we call $x_i$ one vertex of $Q_i$.  Moreover if
$|Q_i|\ge 2$ we call $x'_i$ one vertex of $Q_i\setminus\{x_i\}$.
Recall that if $G$ has a good stable set, then we can conclude the theorem using Theorem~\ref{thm:tools}(\ref{goods}).

\smallskip

 \noindent{(I)} Suppose that $G$ is a clique expansion of $G_1$.  (We refer to \cite{KM2018, KM-arxiv} for alternate proofs.) We may assume that $|Q_i|\geq 2$, for each $i\in \{1,\ldots,5\}$, otherwise if
 $|Q_1|=1$ (say), then  $G\setminus\{x_1\}$ is perfect, as it is a clique expansion of $P_4$, and we can conclude with Theorem~\ref{thm:tools}(\ref{stable}).
 Let $X$ be a subset of $V(G)$ obtained by taking
two vertices from $Q_i$ for each $i\in \{1,\ldots,5\}$. Then since $\chi(G[X])=5$ and $\omega(G\setminus X) = q-4$, by hypothesis, we have $\chi(G)\le \lceil\frac{5}{4}\omega(G\setminus X)\rceil+5 \le\lceil\frac{5}{4}q\rceil$.

\smallskip

\noindent (II) Suppose that $G$ is a  clique expansion of $G_2$. Then $\{x_2,x_5,x_6\}$ is a good stable set of $G$, and we can conclude with Theorem~\ref{thm:tools}(\ref{goods}).

\medskip
\noindent (III) Suppose that $G$ is a  clique expansion of $G_3$. Suppose that $|Q_5|\le |Q_6|$.  By hypothesis we can color $G\setminus
Q_5$ with $\lceil\frac{5}{4}q\rceil$ colors.  Since $Q_6$ is complete
to $Q_1\cup Q_4$, which is equal to $N(Q_5)$, we can extend this
coloring to $Q_5$, using for $Q_5$ the colors used for $Q_6$.
Therefore let us assume that $|Q_5| > |Q_6|$.  It follows that  $|Q_{15}|>|Q_{16}|$, so $Q_{16}$ is
not a $q$-clique.   Likewise we may assume that $|Q_7|>|Q_3|$, and consequently $Q_{23}$ is not a $q$-clique. Then:

$Q_{15}$ is a $q$-clique, for otherwise $\{x_2,x_4\}$ is a good stable set of $G$.

$Q_{45}$ is a $q$-clique, for otherwise $\{x_1,x_3,x_7\}$ is a good stable set of $G$.

$Q_{12}$ is a $q$-clique, for otherwise $\{x_3,x_5,x_7\}$ is a good stable set of $G$.

$Q_{47}$ is a $q$-clique, for otherwise $\{x_2,x_5,x_6\}$ is a good stable set of $G$.

$Q_{27}$ is a $q$-clique, for otherwise $\{x_1,x_4\}$ is a good stable set of $G$.

\noindent{}The above properties imply that there is an integer $a$ with $1\le a\le q-1$ such that
$|Q_2|=|Q_5|=|Q_7|=a$ and $|Q_1|=|Q_4|=q-a$. Since $|Q_7|>|Q_3|$, we have $a\geq 2$.
Since $q=|Q_{27}|=2a$, $a=\frac{q}{2}$. So $q$ is even, $q\ge 4$ and
$|Q_1|=|Q_2|=|Q_4|=|Q_5|=|Q_7| = \frac{q}{2}\ge 2$.

Now consider the five stable sets $\{x_1, x_3, x_7\}$, $\{x'_1, x_4\}$, $\{x_5, x_6, x'_7\}$, $\{x_2, x'_4\}$ and $\{x'_2,
x'_5\}$.  It is easy to see that their union $U$ meets every
$q$-clique  four times.  It follows  that $\omega(G\setminus U) = q-4$, and we can conclude
using Theorem~\ref{thm:tools}(\ref{fives}).

\medskip

 \noindent (IV) Suppose that $G$ is a  clique expansion of either $G_4$ or $G_5$. Suppose that $|Q_5|\le |Q_7|$.  By hypothesis we can color $G\setminus
Q_5$ with $\lceil\frac{5}{4}q\rceil$ colors.  Since $Q_7$ is complete
to $Q_1\cup Q_4$, which is equal to $N(Q_5)$, we can extend this
coloring to $Q_5$, using for $Q_5$ the colors used for $Q_7$.
Therefore let us assume that $|Q_5| > |Q_7|$.  It follows that  $|Q_{45}|>|Q_{47}|$, so $Q_{47}$ is
not a $q$-clique.   Likewise we may assume that $|Q_5|>|Q_6|$ (for otherwise any $\lceil\frac{5}{4}q\rceil$-coloring of
$G\setminus Q_5$ can be extended to $Q_5$), and consequently $Q_{16}$ is not a $q$-clique. Now if $G$ is a  clique expansion of $G_4$, then $\{x_2,x_5,x_6\}$ is a good stable set of $G$, and if $G$ is a  clique expansion of   $G_5$, then $\{x_2,x_5,x_6,x_8\}$ is a good stable set of $G$.
In either case, we can conclude the theorem with Theorem~\ref{thm:tools}(\ref{goods}).

\medskip
\noindent (V) Suppose that $G$ is a  clique expansion of $G_6$. Suppose that $|Q_8|\le |Q_1|$.  By hypothesis we can color $G\setminus
Q_8$ with $\lceil\frac{5}{4}q\rceil$ colors.  Since $Q_1$ is complete
to $Q_2\cup Q_5\cup Q_6$, which is equal to $N(Q_8)$, we can extend this
coloring to $Q_8$, using for $Q_8$ the colors used for $Q_1$.
Therefore let us assume that $|Q_8| > |Q_1|$.  It follows that  $|Q_{68}|>|Q_{16}|$ and $|Q_{58}|>|Q_{15}|$, and consequently
$Q_{16}$ and $Q_{15}$ are not $q$-cliques.   Likewise we may assume that $|Q_5|>|Q_6|$ (for otherwise any $\lceil\frac{5}{4}q\rceil$-coloring of
$G\setminus Q_5$ can be extended to $Q_5$), and consequently $Q_{68}$ is not a $q$-clique. Then:

$Q_{23}$ is a $q$-clique, for otherwise $\{x_1,x_4,x_8\}$ is a good stable set of $G$.

$Q_{28}$ is a $q$-clique, for otherwise $\{x_3,x_5,x_7\}$ is a good stable set of $G$.

$Q_{58}$ is a $q$-clique, for otherwise $\{x_2,x_4\}$ is a good stable set of $G$.

$Q_{45}$ is a $q$-clique, for otherwise $\{x_3,x_7,x_8\}$ is a good stable set of $G$.

\noindent{Now}  we claim that $Q_{47}$ is not a $q$-clique. Suppose not. Then the above properties imply that there is an integer $a$ with $1\le a\le q-1$ such that
$|Q_2|=|Q_5|=|Q_7|=a$ and $|Q_3|=|Q_4|=|Q_8|=q-a$. Since $|Q_{346}|=|Q_6|+2(q-a)\le q$, we have $|Q_6|\le 2a-q$.
Also, since $|Q_{127}|=|Q_1|+2a\le q$, we have $|Q_1|\le q-2a$. However, $2\le |Q_{16}|\le (q-2a)+(2a-q) =0$ which is a contradiction.
So $Q_{47}$ is not a $q$-clique. Then $\{x_2,x_5,x_6\}$ is a good stable set of $G$, and we can conclude the theorem with Theorem~\ref{thm:tools}(\ref{goods}).
\end{proof}

\begin{theorem} \label{thm:54-G7}
Let $G$ be a clique expansion of $G_7$, and assume that every induced subgraph
$G'$ of $G$ satisfies $\chi(G')\le \lceil\frac{5}{4}\omega(G')\rceil$.
Then $\chi(G)\le \lceil\frac{5}{4}\omega(G)\rceil$.
\end{theorem}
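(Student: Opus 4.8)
The plan is to follow the template used above for $G_3$. Write $q=\omega(G)$. Recall from case~(D) in the proof of Theorem~\ref{thm:struc} that $G_7$ is a five-cycle $A_1A_2A_3A_4A_5$ together with two pure vertices $Y_1$ (adjacent to $A_1,A_3,A_4$) and $Y_4$ (adjacent to $A_1,A_2,A_4$), and a vertex $R$ complete to $\{Y_1,Y_4\}$ and anticomplete to the cycle; denote the corresponding parts of the clique expansion by the same symbols. Then the maximal cliques of $G$ are $K_1=A_1\cup A_2\cup Y_4$, $K_2=A_3\cup A_4\cup Y_1$, $K_3=A_2\cup A_3$, $K_4=A_4\cup A_5$, $K_5=A_5\cup A_1$, $K_6=A_1\cup Y_1$, $K_7=A_4\cup Y_4$, $K_8=Y_1\cup R$ and $K_9=Y_4\cup R$. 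As before I may assume $G$ has no good stable set (Theorem~\ref{thm:tools}(\ref{goods})); and whenever two parts $U,V$ are nonadjacent with $N_{G_7}(U)\subseteq N_{G_7}(V)$ and $|U|\le|V|$, I may colour $G\setminus U$ by the hypothesis and recolour $U$ with the colours of $V$, so I may assume $|U|>|V|$.

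I would first apply this colour-extension to the pairs $(A_5,Y_1)$ and $(A_5,Y_4)$, which is legitimate because $N_{G_7}(A_5)=\{A_1,A_4\}$ is contained in both $N_{G_7}(Y_1)$ and $N_{G_7}(Y_4)$; this yields $|A_5|>|Y_1|$ and $|A_5|>|Y_4|$, whence $|K_6|<|K_5|\le q$ and $|K_7|<|K_4|\le q$. Thus $K_6,K_7$ are not $q$-cliques, and every $q$-clique is one of $K_1,\dots,K_5,K_8,K_9$. Next, for each $j\in\{1,2,3,4,5,8,9\}$ I would produce a stable set (one vertex from each named part) meeting all of $K_1,\dots,K_5,K_8,K_9$ except $K_j$, forcing $K_j$ to be a $q$-clique on pain of exhibiting a good stable set: $\{Y_1,Y_4,A_5\}$ omits only $K_3$; $\{A_1,A_3,R\}$ only $K_4$; $\{A_2,A_4,R\}$ only $K_5$; $\{A_3,A_5,R\}$ only $K_1$; $\{A_2,A_5,R\}$ only $K_2$; $\{A_3,A_5,Y_4\}$ only $K_8$; and $\{A_2,A_5,Y_1\}$ only $K_9$ (each may also omit $K_6$ or $K_7$, which is harmless).

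Hence I may assume $|K_1|=\dots=|K_5|=|K_8|=|K_9|=q$. Solving this linear system forces $q$ to be even and, with $a:=|A_1|$, gives $|A_1|=|A_4|=a$, $|A_2|=|A_3|=q/2$, $|A_5|=q-a$, $|Y_1|=|Y_4|=q/2-a$ and $|R|=q/2+a$, where $1\le a\le q/2-1$.

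The crux is this fully determined, one-parameter configuration, which has no good stable set; I expect to finish it by a final colour-extension that splits on the size of $a$. Since every proper induced subgraph of $G$ obeys the bound, both $\chi(G\setminus A_5)$ and $\chi(G\setminus R)$ are at most $\lceil\frac54 q\rceil$. If $a\le\lceil q/4\rceil$, delete $A_5$: its neighbourhood $A_1\cup A_4$ uses at most $2a$ colours, so at least $\lceil\frac54 q\rceil-2a\ge q-a=|A_5|$ colours remain to recolour the clique $A_5$. If instead $a\ge\lfloor q/4\rfloor$, delete $R$: its neighbourhood $Y_1\cup Y_4$ uses at most $q-2a$ colours, leaving at least $\lceil\frac54 q\rceil-(q-2a)\ge q/2+a=|R|$ colours for $R$. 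A short arithmetic check shows these two ranges of $a$ exhaust all admissible values, giving $\chi(G)\le\lceil\frac54 q\rceil$ in every case. (One could instead invoke Theorem~\ref{thm:tools}(\ref{fives}) with five stable sets meeting each $q$-clique four times, but the deletion argument sidesteps the capacity bookkeeping that the free parameter $a$ would otherwise force.)
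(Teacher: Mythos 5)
Your proof is correct, and I verified the details: your list of nine maximal cliques matches the structure of $G_7$ forced by case~(D) of the proof of Theorem~\ref{thm:struc}; each of your seven ``omit-one'' stable sets is indeed stable and meets every other candidate maximal clique; the linear system from the seven equalities does force $q$ even with $|A_1|=|A_4|=a$, $|A_2|=|A_3|=\frac{q}{2}$, $|A_5|=q-a$, $|Y_1|=|Y_4|=\frac{q}{2}-a$, $|R|=\frac{q}{2}+a$; and your final arithmetic is right (the $A_5$-extension needs $\lceil\frac{5}{4}q\rceil-2a\ge q-a$, i.e.\ $a\le\lceil\frac{q}{4}\rceil$, while the $R$-extension needs $\lceil\frac{5}{4}q\rceil-(q-2a)\ge \frac{q}{2}+a$, i.e.\ $a\ge \frac{3q}{2}-\lceil\frac{5}{4}q\rceil=\lfloor\frac{q}{4}\rfloor$ for even $q$, and these ranges overlap, covering all $1\le a\le \frac{q}{2}-1$). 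Up to the parametrized configuration your route coincides with the paper's: the paper also uses colour-extensions to rule out the two maximal cliques $A_1\cup Y_1$ and $A_4\cup Y_4$ (via the pairs $(A_5,Y_4)$ and $(R,A_1)$ in its labelling, rather than your $(A_5,Y_1)$ and $(A_5,Y_4)$ --- same effect), then good stable sets to force the remaining seven maximal cliques to be $q$-cliques, then solves the same system. Where you genuinely diverge is the endgame. The paper observes $|A_1\cup Y_1|,|A_4\cup Y_4|\le q-2$, checks $q\ge 4$ and that the large parts have at least two or three vertices, and exhibits five explicit stable sets whose union meets every $q$-clique exactly four times, concluding via Theorem~\ref{thm:tools}(\ref{fives}). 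You instead split on the free parameter $a$ and finish with two more deletion-and-recolour steps. Your finish is more elementary and shorter: it needs no multiplicity bookkeeping, no auxiliary vertices $x'_i,x''_i$, and no appeal to Theorem~\ref{thm:tools}(\ref{fives}); its only cost is the case split on $a$, whereas the paper's five-set construction is uniform in $a$. Both are complete proofs, and yours is a legitimate simplification of this case.
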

\begin{proof}
Since $G$ is a clique expansion of $G_7$ there is a partition of $V(G)$ into
eight non-empty cliques $Q_1, ..., Q_8$, where $Q_i$ corresponds to
the vertex $x_i$ of $G_7$.  We write, e.g., $Q_{123}$ instead of
$Q_1\cup Q_2\cup Q_3$, etc.  Let $q=\omega(G)$.  For each
$i\in\{1,...,8\}$ we call $x_i$ one vertex of $Q_i$.  Moreover if
$|Q_i|\ge 2$ we call $x'_i$ one vertex of $Q_i\setminus\{x_i\}$, and
if $|Q_i|\ge 3$ we call $x''_i$ one vertex of $Q_i\setminus\{x_i,
x'_i\}$.

Suppose that $|Q_7|\le |Q_2|$.  By hypothesis we can color $G\setminus
Q_7$ with $\lceil\frac{5}{4}q\rceil$ colors.  Since $Q_2$ is complete
to $Q_1\cup Q_5$, which is equal to $N(Q_7)$, we can extend this
coloring to $Q_7$, using for $Q_7$ the colors used for $Q_2$.
Therefore let us assume that $|Q_7| > |Q_2|$; and similarly, that
$|Q_8| > |Q_5|$.  It follows that $|Q_{25}|<|Q_{57}|$, so $Q_{25}$ is
not a $q$-clique of $G$.  Likewise $Q_{14}$ is not a $q$-clique of
$G$.  Therefore all $q$-cliques of $G$ are in the set ${\cal
Q}=\{Q_{123}, Q_{456}, Q_{36}, Q_{17}, Q_{57}, Q_{28},  Q_{48}\}$.

If $Q_{123}$ is not a $q$-clique, then $\{x_6, x_7, x_8\}$
is a good stable set of $G$, and we can conclude
using Theorem~\ref{thm:tools}(\ref{goods}).  Therefore we may assume
that $Q_{123}$, and similarly $Q_{456}$, is a $q$-clique of $G$.

If $Q_{36}$ is not a $q$-clique, then $\{x_1, x_5, x_8\}$
is a good  stable set of $G$, and we can conclude
using Theorem~\ref{thm:tools}(\ref{goods}).  Therefore we may assume
that $Q_{36}$ is a $q$-clique of $G$.

If $Q_{17}$ is not a $q$-clique, then $\{x_3, x_5, x_8\}$
is a good stable set of $G$, and we can conclude
using Theorem~\ref{thm:tools}(\ref{goods}).  Therefore we may assume
that $Q_{17}$, and similarly each of $Q_{57}$, $Q_{28}$ and $Q_{48}$, is a
$q$-clique of $G$.

Hence $\cal Q$ is precisely the set of all $q$-cliques of $G$.  It
follows that there are integers $a,b,c$ with $a=|Q_1]$, $b=|Q_2|$,
$c=|Q_3|$, $a+b+c=q$, and then $|Q_7|=q-a$, $|Q_5|=a$, $|Q_8|=q-b$,
$|Q_4|=b$, hence $|Q_6|=c$.  Since $q=|Q_{36}|=2c$, it must be that
$q$ is even and $c=\frac{q}{2}$, so $|Q_3|= |Q_6|=\frac{q}{2}$.

Since each of $Q_1, Q_2, Q_3$ is non-empty we have $q\ge 3$, and since
$q$ is even, $q\ge 4$.  Hence $|Q_3|, |Q_6|\ge 2$ (so the vertices
$x'_3$ and $x'_6$ exist).  Since $Q_2$ and $Q_3$ are non-empty, and
$|Q_3|=\frac{q}{2}$, we have $a< \frac{q}{2}$, so $|Q_7|=q-a >
\frac{q}{2}$, so $|Q_7|\ge 3$ (and so the vertices $x'_7$ and $x''_7$
exist).  Likewise $|Q_8|\ge 3$ (and so the vertices $x'_8$ and $x''_8$
exist).  We observe that the clique $Q_{14}$ satisfies $|Q_{14}|=a+b=
\frac{q}{2}\le q-2$ since $q\ge 4$.  Likewise $|Q_{25}|\le q-2$.

Now consider the five stable sets $\{x_3, x_4, x_7\}$, $\{x_1, x_6,
x_8\}$, $\{x'_3, x_5, x'_8\}$, $\{x'_6,$ $x_2, x'_7\}$ and $\{x''_7,
x''_8\}$.  It is easy to see that their union $U$ meets every
$q$-clique (every member of $\cal Q$) four times, and that it meets
each of $Q_{14}$ and $Q_{25}$ twice.  It follows (since $|Q_{14}|,
|Q_{25}|\le q-2$) that $\omega(G\setminus U)=q-4$, and we can conclude
using Theorem~\ref{thm:tools}(\ref{fives}).
\end{proof}

\begin{theorem}\label{thm:54-G8910}
Let $G$ be a clique expansion of either $G_8$, $G_9$ or $G_{10}$, and assume that every induced subgraph
$G'$ of $G$ satisfies $\chi(G')\le \lceil\frac{5}{4}\omega(G')\rceil$.
Then $\chi(G)\le \lceil\frac{5}{4}\omega(G)\rceil$.
\end{theorem}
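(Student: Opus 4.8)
The plan is to argue exactly as for Theorems~\ref{thm:54-G1-G6} and \ref{thm:54-G7}. Each of $G_8,G_9,G_{10}$ is produced by case~(B) in the proof of Theorem~\ref{thm:struc}: here $R=\emptyset$ and two of the sets $Y_1,\dots,Y_5$ are non-empty, say $Y_2$ and $Y_3$ (these play the roles of $Y_{i+2},Y_{i-2}$ for $i=5$ in Claim~\ref{yy2}), so that $Y_1=Y_4=\emptyset$, the set $A_5$ is refined into $A_5^-=N(Y_3)\cap A_5$, $A_5^+=N(Y_2)\cap A_5$ and a possible remaining part, and a pure set $Y_5$ may also be present (with $Y_5\neq\emptyset$ forcing the remaining part of $A_5$ to be empty, by Claim~\ref{yy2}(f)). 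The three graphs differ only in which of these optional parts occur. For a clique expansion $G$ I write $V(G)=Q_1\cup\cdots\cup Q_{|V(H)|}$ with $Q_i$ replacing the vertex $x_i$ of $H\in\{G_8,G_9,G_{10}\}$, set $q=\omega(G)$, and name $x_i\in Q_i$, $x'_i\in Q_i\setminus\{x_i\}$, $x''_i\in Q_i\setminus\{x_i,x'_i\}$ when sizes allow. Throughout, a good stable set lets me finish via Theorem~\ref{thm:tools}(\ref{goods}), a perfect deletion or a low-degree vertex via parts~(\ref{stable}) and~(\ref{degq}); so the whole argument is organised around forcing the relevant maximal cliques to have size exactly $q$.

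First I would perform the color-sharing reductions used in cases~(III)--(V) of Theorem~\ref{thm:54-G1-G6}: whenever two cliques $Q_i,Q_j$ are non-adjacent and $Q_j$ is complete to $N(Q_i)$, a $\lceil\frac{5}{4}q\rceil$-coloring of the proper induced subgraph $G\setminus Q_i$ (which exists by hypothesis) extends to $G$ by recoloring $Q_i$ with the colors of $Q_j$, provided $|Q_i|\le|Q_j|$; hence I may assume $|Q_i|>|Q_j|$ for every such pair. The clearest instance is a middle part of the refined $A_5$, which is adjacent only to $A_1$ and $A_4$ and is therefore dominated by either outer part $A_5^\pm$; each resulting strict inequality certifies that a particular union of cliques fails to be a $q$-clique.

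Next I would run the good-stable-set sweep, whose combinatorial input is the list of maximal cliques of $H$, read off from the case-(B) adjacencies: the backbone edges $\{A_1,A_2\}$ and $\{A_3,A_4\}$, the triangles $\{A_4,A_5^+,Y_2\}$ and $\{A_1,A_5^-,Y_3\}$, the edge $\{Y_2,Y_3\}$, the remaining edges of the $A_5$-refinement, and --- when $Y_5$ is present --- the triangle $\{A_2,A_3,Y_5\}$ together with the edges $\{A_5^-,Y_5\}$ and $\{A_5^+,Y_5\}$. For each candidate I either show it is not a $q$-clique, in which case a suitable transversal picking one vertex from each of the remaining parts is a stable set meeting every actual $q$-clique and Theorem~\ref{thm:tools}(\ref{goods}) applies, or I record that it has size $q$. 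I expect $G_8$ and $G_9$ to be settled already here, since fewer maximal cliques survive the reductions of the previous step.

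The final and hardest case is when every surviving maximal clique has size exactly $q$; this is where $G_{10}$, the densest of the three (with $Y_5$ present and hence the extra triangle and edges through $Y_5$), will demand real work. Writing out the $q$-clique equations and combining them as in the $G_3$ and $G_7$ arguments should force the part-sizes to pair up and, in particular, force $q$ to be even with the central parts of size $\frac{q}{2}$. With the sizes pinned down I would exhibit five stable sets $S_1,\dots,S_5$ --- repeating colors inside the large parts via the copies $x'_i,x''_i$ --- whose union $U$ meets every $q$-clique exactly four times while meeting each of the finitely many non-$q$-clique unions at most twice; as those unions have size at most $q-2$, this gives $\omega(G\setminus U)=q-4$, and Theorem~\ref{thm:tools}(\ref{fives}) with $t=5$ finishes. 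The main obstacle is precisely this simultaneous construction: the five stable sets must achieve hitting number exactly four on all $q$-cliques of $G_{10}$ at once, and checking this --- together with confirming that the earlier reductions really eliminate every configuration admitting no good stable set --- is the delicate, graph-specific heart of the proof.
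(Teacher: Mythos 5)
There is a genuine gap: your text is a strategy sketch whose decisive steps are left undone, and you say so yourself (``the main obstacle is precisely this simultaneous construction \dots is the delicate, graph-specific heart of the proof''). The color-sharing reductions and the good-stable-set sweep you describe do match the opening moves of the paper's proof, and they do settle $G_8$ (the paper closes $G_8$ with a counting contradiction showing $Q_{23}$ is not a $q$-clique, whence $\{x_1,x_4\}$ is a good stable set). But your prediction of where the difficulty lies, and of which tool finishes, is wrong in both remaining cases. For $G_9$, it is there --- not in $G_{10}$ --- that the sweep pins the sizes: $q$ is even and $|Q_1|=|Q_2|=|Q_3|=|Q_4|=|Q_9|=\frac{q}{2}$. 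At that point the paper does \emph{not} invoke Theorem~\ref{thm:tools}(\ref{fives}); it builds an explicit $\lceil\frac{5}{4}q\rceil$-coloring, splitting into $q=4k$ and $q=4k+2$, assigning unions of color blocks $A,B,C,D,E$ (plus an extra color $z$ when $q\equiv 2 \pmod 4$) around the central five-cycle of cliques, and extending to $Q_5,\dots,Q_8$ via a case analysis on which of these are small (using $|Q_5|+|Q_7|\le\frac{q}{2}$ etc.). This choice is not cosmetic: after the reduction the sizes of the residual cliques ($Q_{157}$, $Q_{468}$, $Q_{78}$, \dots) are only bounded above, so a union $U$ of five stable sets must hit every clique of size $q-1$ at least three times, and there may not be enough vertices available to do this (e.g.\ nothing prevents $|Q_7|=1$, and small $q$ such as $q=4$ makes $\omega(G\setminus U)\le q-4$ extremely demanding). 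Your plan gives no construction and no argument that these obstructions can be met.

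For $G_{10}$ your plan is farther off. The paper views $G_{10}$ as the circulant on $u_1,\dots,u_9$ with edges $u_iu_{i+1}$ and $u_iu_{i+3}$ (mod $9$); its maximal cliques are the nine edges $Q_{i,i+1}$ and the three triangles such as $Q_{147}$, so the relevant arithmetic is in thirds, not halves: two counting observations (if three consecutive edge-cliques $Q_{i,i+1},Q_{i+1,i+2},Q_{i+2,i+3}$ are not $q$-cliques then $\{x_{i+4},x_{i+6},x_{i+8}\}$ is a good stable set; and $|Q_{i\pm1}|\le\frac{q}{3}$ forces $|Q_i|\ge\frac{2q}{3}$) reduce every size configuration to a good stable set via Theorem~\ref{thm:tools}(\ref{goods}). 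No sizes are ever pinned to $\frac{q}{2}$ and no five-stable-set union is needed. So the hardest part of your plan --- the simultaneous five-stable-set construction for $G_{10}$ with central parts of size $\frac{q}{2}$ --- addresses a configuration that does not occur, while the case that genuinely resists the good-stable-set sweep ($G_9$) is left without the direct coloring that actually completes it.
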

\begin{proof}
Let $q=\omega(G)$.

\smallskip

\noindent{(I)} First suppose that $G$ is a clique expansion of $G_8$. So there is a partition of $V(G)$ into
eight non-empty cliques $Q_1, ..., Q_8$, where $Q_i$ corresponds to
the vertex $x_i$ of $G_8$.  We write, e.g., $Q_{123}$ instead of
$Q_1\cup Q_2\cup Q_3$, etc.  For each
$i\in\{1,...,8\}$ we call $x_i$ one vertex of $Q_i$.

Suppose that $|Q_2|\le |Q_7|$.  By hypothesis we can color $G\setminus
Q_2$ with $\lceil\frac{5}{4}q\rceil$ colors.  Since $Q_7$ is complete
to $Q_1\cup Q_3\cup Q_8$, which is equal to $N(Q_2)$, we can extend this
coloring to $Q_2$, using for $Q_2$ the colors used for $Q_7$.
Therefore let us assume that $|Q_2| > |Q_7|$; and similarly, that
$|Q_3| > |Q_8|$.  It follows that $|Q_{28}|>|Q_{78}|$, so $Q_{78}$ is
not a $q$-clique of $G$.  Likewise $|Q_{23}|>|Q_{73}|$, so $Q_{73}$ is
not a $q$-clique of $G$, and similarly $Q_{28}$ is not a $q$-clique.

If $Q_{45}$ is not a $q$-clique, then $\{x_1, x_3, x_8\}$ is a good stable set of $G$, and we can
conclude with Theorem~\ref{thm:tools}(\ref{goods}). Hence we may assume that $Q_{45}$,
and similarly $Q_{16}$, is a $q$-clique. Also $Q_{12}$ is a $q$-clique, for otherwise
$\{x_3, x_5, x_6\}$ is a good stable set, and similarly $Q_{34}$ is a $q$-clique.

Now we claim that $Q_{23}$ is not a $q$-clique of $G$. Suppose not. Then the above properties imply that there is an integer $a$ with $1\le a\le q-1$ such that $|Q_1|=|Q_3|=|Q_5|=a$ and $|Q_2|=|Q_4|=|Q_6|=q-a$. However we have $q\ge |Q_{157}|>2a$
and $q\ge |Q_{468}|>2(q-a)$, hence $2q > 2a + 2(q-a)$, a contradiction. So $Q_{23}$ is not a $q$-clique of $G$. But, then $\{x_1,x_4\}$ is a good stable set of $G$, and we can conclude the theorem with Theorem~\ref{thm:tools}(\ref{goods}).

\medskip

\noindent (II) Now suppose that $G$ is a clique expansion of $G_9$. Let $Q_9$ be the clique that corresponds to $x_9$ in the clique expansion. As in the case of $G_8$ we may assume that $Q_{28}$, $Q_{37}$ and $Q_{78}$ are not $q$-cliques.
Likewise we may assume that $|Q_9|>|Q_5|$ (for otherwise any $\lceil\frac{5}{4}q\rceil$-coloring of
$G\setminus Q_9$ can be extended to $Q_9$), and consequently $Q_{45}$ is not a $q$-clique; and
similarly $Q_{16}$ is not a $q$-clique.

Then $Q_{19}$ is a $q$-clique, for otherwise $\{x_2,x_4,x_7\}$ is a good stable set, and similarly
$Q_{49}$ is a $q$-clique. Also $Q_{12}$ is a $q$-clique, for otherwise $\{x_3,x_5,x_6,x_9\}$ is
a good stable set; and similarly $Q_{34}$ is a $q$-clique. And $Q_{23}$ is a $q$-clique, for
otherwise $\{x_1, x_4\}$ is a $q$-clique.

The  properties given in the preceding paragraph imply that $q$ is even and that
$|Q_1|= |Q_2|=|Q_3|=|Q_4|=|Q_9|=\frac{q}{2}$. We now distinguish two cases.

First suppose that $q=4k$ for some $k\ge 1$. Hence $\lceil\frac{5}{4}q\rceil=5k$.
Let $A,B,C,D,E$ be five disjoint sets of colors, each of size $k$.
We color the vertices in $Q_1$ with the colors from $A\cup B$, the vertices in $Q_2$
with $C\cup D$, the vertices in $Q_3$ with $E\cup A$, the vertices in $Q_4$
with $B\cup C$, and the vertices in $Q_9$ with $D\cup E$. Thus we obtain a $5k$-coloring
of $G[Q_1\cup Q_2\cup Q_3\cup Q_4\cup Q_9]$. We can extend it to the rest of the graph as follows.
Since $Q_{157}$ is a clique, and $|Q_1|=\frac{q}{2}=2k$, we have $|Q_5|+|Q_7|\le 2k$, hence
either $|Q_5|\le k$ or $|Q_7|\le k$. Likewise, we have either $|Q_6|\le k$ or $|Q_8|\le k$.
This yields (up to symmetry) three possibilities: \\
(i) $|Q_5|\le k$ and $|Q_6|\le k$. Then we can color $Q_5$ with colors form $E$, $Q_6$ with colors
from $D$, $Q_7$ with colors from $C\cup D$, and $Q_8$ with colors from $A\cup E$.
\\
(ii) $|Q_5|\le k$ and $|Q_8|\le k$. Then we can color $Q_5$ with colors form $E$, $Q_6$ with colors
from $D\cup E$, $Q_7$ with colors from $C\cup D$, and $Q_8$ with colors from $A$.
(The case where $|Q_6|\le k$ and $|Q_7|\le k$ is symmetric.)
\\
(iii) $|Q_7|\le k$ and $|Q_8|\le k$. Then we can color $Q_5$ and $Q_6$ with colors from $D\cup E$,
$Q_7$ with colors from $C$, and $Q_8$ with colors from $A$.

Now suppose that $q=4k+2$ for some $k\ge 1$. Hence $\lceil\frac{5}{4}q\rceil=5k+3$.
Let $A,B,C,D,E$ and $\{z\}$ be six disjoint sets of colors, with $|A|=|B|=|C|=k$ and
$|D|=|E|=k+1$. So these are $5k+3$ colors.
We color the vertices in $Q_1$ with the colors from $C\cup D$, the vertices in $Q_2$
with $A\cup E$, the vertices in $Q_3$ with $B\cup D$, the vertices in $Q_4$
with $C\cup E$, and the vertices in $Q_9$ with $A\cup B\cup\{z\}$. Thus we obtain a $5k+3$-coloring
of $G[Q_1\cup Q_2\cup Q_3\cup Q_4\cup Q_9]$.
We can extend it to the rest of the graph as follows.
Since $Q_{157}$ is a clique, and $|Q_1|=\frac{q}{2}=2k+1$, we have $|Q_5|+|Q_7|\le 2k+1$, hence
either $|Q_5|\le k$ or $|Q_7|\le k$ (and in any case $\max\{|Q_5|,|Q_7|\}\le 2k$).
Likewise, we have either $|Q_6|\le k$ or $|Q_8|\le k$ (and $\max\{|Q_6|,|Q_8|\}\le 2k$).
This yields (up to symmetry) three possibilities: \\
(i) $|Q_5|\le k$ and $|Q_6|\le k$. Then we can color $Q_5$ with colors form $B$, $Q_6$ with colors
from $A$, $Q_7$ with colors from $A\cup E$, and $Q_8$ with colors from $B\cup D$.
\\
(ii) $|Q_5|\le k$ and $|Q_8|\le k$. Then we can color $Q_5$ with colors form $B$, $Q_6$ with colors
from $A\cup B$, $Q_7$ with colors from $A\cup E$, and $Q_8$ with colors from $D$.
(The case where $|Q_6|\le k$ and $|Q_7|\le k$ is symmetric.)
\\
(iii) $|Q_7|\le k$ and $|Q_8|\le k$. Then we can color $Q_5$ and $Q_6$ with colors from $A\cup B$,
$Q_7$ with colors from $E$, and $Q_8$ with colors from $D$.

\medskip

\noindent (III) Finally suppose that $G$ is a clique expansion of $G_{10}$.
We view $G_{10}$ as the graph with nine vertices $u_1, ..., u_9$ and edges $u_iu_{i+1}$
and $u_iu_{i+3}$ for each $i$ modulo~$9$. For each $i$ let $Q_i$ be the clique of $G$ that
corresponds to $u_i$, and let $x_i$ be one vertex of $Q_i$. As usual we write e.g.~$Q_{12}$ instead
of $Q_1\cup Q_2$, etc. We make two observations. \\
\emph{Observation 1}: If for some $i$ the three cliques $Q_{i,i+1}$, $Q_{i+1,i+2}$ and $Q_{i+2,i+3}$
are not $q$-cliques, then $\{x_{i+4}, x_{i+6}, x_{i+8}\}$ is a good stable set of $G$, and we can
conclude using Theorem~\ref{thm:tools}(\ref{goods}). $\diamond$ \\
\emph{Observation 2}: If for some $i$ we have $|Q_{i-1}|\le \frac{q}{3}$ and $|Q_{i+1}|\le \frac{q}{3}$,
then $|Q_i|\ge \frac{2q}{3}$. Indeed suppose (for $i=1$) that
$|Q_9|\le \frac{q}{3}$, $|Q_2|\le \frac{q}{3}$ and $|Q_1|< \frac{2q}{3}$. Then $Q_{91}$ and
$Q_{12}$ are not $q$-cliques, so, by Observation~1, we may assume that $Q_{89}$ and
$Q_{23}$ are $q$-cliques. Hence $|Q_8|\ge \frac{2q}{3}$, and consequently
$|Q_5|\le \frac{q}{3}$ and $|Q_7|\le \frac{q}{3}$; and similarly $|Q_3|\ge \frac{2q}{3}$, and consequently
$|Q_4|\le \frac{q}{3}$ and $|Q_6|\le \frac{q}{3}$. But then $Q_{45}$, $Q_{56}$ and $Q_{67}$ are
not $q$-cliques, so we can conclude as in Observation~1. $\diamond$

Now, since $Q_{147}$ is a clique, we have $|Q_i|\le \frac{q}{3}$ for some $i\in\{1,4,7\}$;
and similarly $|Q_j|\le \frac{q}{3}$ for some $j\in\{2,5,8\}$, and $|Q_k|\le \frac{q}{3}$ for
some $k\in\{3,6,9\}$. Up to symmetry this implies one the following three cases: \\
(a) $|Q_1|, |Q_2|, |Q_3|\le \frac{q}{3}$. Then we can conclude using Observation~2. \\
(b) $|Q_1|, |Q_2|, |Q_6|\le \frac{q}{3}$. Then $Q_{12}$ is not a $q$-clique, so, by Observation~1,
we may assume that one of $Q_{91}$ and $Q_{23}$, say $Q_{91}$ is a $q$-clique. Hence
$|Q_9|\ge \frac{2q}{3}$, and consequently $|Q_3|\le \frac{q}{3}$. But then we are in case~(a) again. \\
(c) $|Q_1|, |Q_3|, |Q_5|\le \frac{q}{3}$. By Observation~2 we have $|Q_2|\ge \frac{2q}{3}$ and
$|Q_4|\ge \frac{2q}{3}$, and consequently $|Q_8|\le \frac{q}{3}$ and $|Q_7|\le \frac{q}{3}$.
Then $Q_7$, $Q_8$ and $Q_3$ are like in case~(b).

This completes the proof of the theorem.
\end{proof}

\subsection{Coloring the graph class $\cal{H^*}$}

Recall that   $\cal{H}^*$ is the class of graphs $G \in \cal{H}$ such that, with the notation as in Section~1, the five sets $A_1,A_2,\ldots, A_5$, and the vertex-set of each component of $G[A_7]$ are cliques.

\begin{theorem}\label{thm:54-H}
Let $G\in \cal{H^*}$  and assume that every induced subgraph
$G'$ of $G$ satisfies $\chi(G')\le \lceil\frac{5}{4}\omega(G')\rceil$.
Then $\chi(G)\le \lceil\frac{5}{4}\omega(G)\rceil$.
\end{theorem}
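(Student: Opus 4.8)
The plan is to set $q=\omega(G)$ and exploit the hypothesis that every proper induced subgraph already satisfies the bound, so that Theorem~\ref{thm:tools} is at my disposal. First I would record the shape of $G$: the sets $A_1,\dots,A_5$ form a blown-up $C_5$ of cliques (consecutive sets complete, non-consecutive sets anticomplete); $A_6$ induces a cograph complete to $A_1\cup A_3\cup A_4$ and anticomplete to $A_2\cup A_5$; and, since $A_7\neq\emptyset$ is anticomplete to $A_1\cup\cdots\cup A_5$, each component $T$ of $G[A_7]$ is a clique whose entire neighbourhood $N(T)$ lies in $A_6$ and is complete to $T$. The governing inequalities, each from an explicit clique, are $|A_i|+|A_{i+1}|\le q$ for the five cyclic pairs, $|A_1|+\omega(A_6)\le q$, $|A_3|+|A_4|+\omega(A_6)\le q$, and $|T|+\omega(G[N(T)])\le q$ for every component $T$ of $A_7$. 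I would first clear away the easy regimes exactly as in Theorems~\ref{thm:54-G1-G6}--\ref{thm:54-G8910}: if some vertex has degree at most $\lceil\tfrac54 q\rceil-1$, apply Theorem~\ref{thm:tools}(\ref{degq}); if some $A_i$ with $i\le 5$ is a singleton, then $A_i$ is a stable set whose deletion destroys every induced $C_5$, so $G\setminus A_i$ is perfect and Theorem~\ref{thm:tools}(\ref{stable}) finishes.

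The main reduction is to decouple the colouring of the core $C=G[A_1\cup\cdots\cup A_6]$ from the extension to $A_7$. The key point is that $A_7$ is anticomplete to $A_1\cup\cdots\cup A_5$, so its only chromatic interaction is with $A_6$. Concretely, suppose I can colour $C$ with $\lceil\tfrac54 q\rceil$ colours so that, for every component $T$ of $A_7$, the set $N(T)\subseteq A_6$ receives exactly $\omega(G[N(T)])$ distinct colours. Then each $T$ can be coloured with $|T|$ colours avoiding those on $N(T)$, because the number of available colours is at least $\lceil\tfrac54 q\rceil-\omega(G[N(T)])\ge q-\omega(G[N(T)])\ge |T|$, and distinct components of $A_7$ never conflict. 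Thus the theorem reduces to producing such an ``$A_7$-aware'' colouring of the core.

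For the core itself, the chromatic pressure comes from the blown-up $C_5$ together with the cograph $A_6$. Summing the five cyclic inequalities gives $\sum_{i\le 5}|A_i|\le \tfrac52 q$, so a blown-up $C_5$ alone is $\lceil\tfrac54 q\rceil$-colourable (its chromatic number is the maximum of the pairwise sums, all $\le q$, and $\lceil\tfrac12\sum|A_i|\rceil\le\lceil\tfrac54 q\rceil$); this is exactly where the factor $\tfrac54$, and the extremal example with all parts of size $q/2$, comes from. I would colour $A_6$ optimally with a palette $P$ of $\omega(A_6)$ colours, then colour the blown $C_5$ so that $A_1,A_3,A_4$ use only colours outside $P$ while $A_2,A_5$ reuse $P$ freely (being anticomplete to $A_6$). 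The bounds $|A_1|\le q-\omega(A_6)$ and $|A_3|+|A_4|\le q-\omega(A_6)$ ensure $A_1$ and the clique $A_3\cup A_4$ fit in the $\lceil\tfrac54 q\rceil-\omega(A_6)$ colours outside $P$, and the remaining cyclic constraints reduce to a short feasibility check (choosing how $A_1$'s palette overlaps those of $A_3$ and $A_4$, using that $A_1$ is anticomplete to $A_3\cup A_4$), which holds because of the pairwise bounds $|A_i|+|A_{i+1}|\le q\le\lceil\tfrac54 q\rceil$. In the balanced subcase where a direct assignment is delicate I would instead invoke Theorem~\ref{thm:tools}(\ref{fives}) with five near-transversal stable sets dropping $\omega$ by four, mirroring the arguments for $G_3$, $G_7$ and $G_9$, or a good stable set via Theorem~\ref{thm:tools}(\ref{goods}).

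The hard part is the requirement stated above, that a single $\omega(A_6)$-colouring of the cograph $A_6$ be \emph{simultaneously} optimal on every neighbourhood $N(T)$. An optimal colouring of a cograph need not restrict optimally to an arbitrary induced subgraph (a $P_4$ already shows this), and a component of $A_7$ may attach to a non-clique, even independent, subset of $A_6$ while $A_6$ carries a large clique elsewhere; a carelessly chosen palette on $A_6$ then spends too many colours on some $N(T)$ and blocks the extension. Overcoming this is where I expect the real work to lie: I would need structural claims, forced by $(P_5,\mathrm{gem})$-freeness, describing how the family $\{N(T)\}$ sits inside $A_6$ (that these sets form a laminar or nested family, or are modules of the cograph), so that the cotree of $A_6$ admits an $\omega(A_6)$-colouring optimal on all of them at once. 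This compatibility between the cograph structure of $A_6$ and the attachment of the homogeneous pieces of $A_7$ is the crux of the proof.
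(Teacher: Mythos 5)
Your proposal has a genuine gap, and you have located it yourself: the requirement that a single $\omega(G[A_6])$-coloring of the cograph $A_6$ restrict \emph{optimally} to every set $N(T)$ simultaneously is never established, only conjectured via hoped-for structure (laminarity or modularity of the family $\{N(T)\}$), and no such structural claim is proved or even evidenced. Without it your extension step has no premise, so the proof does not close. Moreover the demand itself is needlessly strong and is exactly what the paper's proof is engineered to avoid: since $A_6$ is \emph{complete} to the clique $A_3\cup A_4$, the colors used on $A_3\cup A_4$ are guaranteed absent from all of $A_6\supseteq N(T)$, no matter how $A_6$ is colored. So whenever some component $T_j$ satisfies $|V(T_j)|\le |A_3\cup A_4|$, the paper colors $G\setminus V(T_j)$ by the induction hypothesis and recycles the colors of $A_3\cup A_4$ on $T_j$ --- no compatibility between the coloring of $A_6$ and the attachments of $A_7$ is ever needed. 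A second, smaller error: your claim that if some $A_i$ ($i\le 5$) is a singleton then $G\setminus A_i$ is perfect is false, because an induced $C_5$ can live entirely inside $A_6\cup A_7$ (three vertices in $A_6$ and two in distinct components of $A_7$; one can check this is compatible with $(P_5,\mathrm{gem})$-freeness and the definition of $\cal H$), so deleting $A_i$ need not destroy all odd holes.

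The paper's actual route is also quite different in its main body and worth internalizing. After the palette-transfer reduction $|A_2|,|A_5|>\omega(G[A_6])$ (color $G\setminus A_2$ and reuse the colors of $A_6$ on $A_2$, since $N(A_2)=A_1\cup A_3$ is complete to $A_6$), it never constructs an explicit coloring of the core. Instead it runs good-stable-set arguments through stable sets of the form $\{x_2,x_5,t_1,\ldots,t_k\}$ --- note that one vertex from \emph{each} component of $A_7$ can always be added to such sets --- to force, via Theorem~\ref{thm:tools}(\ref{goods}), that $A_1\cup A_2$, $A_1\cup A_5$, $A_2\cup A_3$, $A_4\cup A_5$ are all $q$-cliques, which pins down $|A_1|=|A_3|=|A_4|=a$ and $|A_2|=|A_5|=q-a$. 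The transfer trick above then disposes of every component with $|V(T_j)|\le 2a$; in the remaining case every component has at least four vertices ($a=1$ being killed by Theorem~\ref{thm:tools}(\ref{degq})), and the proof finishes with Theorem~\ref{thm:tools}(\ref{fives}) applied to five stable sets, four of which contain one vertex from every component $T_j$, so that their union meets every $q$-clique (cyclic or containing some $T_j$) four times. Your plan of an ``$A_7$-aware'' coloring of the core could conceivably be completed, but as written the crux is open, whereas the completeness-to-$A_3\cup A_4$ observation plus the stable-set machinery of Theorem~\ref{thm:tools} makes the whole difficulty evaporate.
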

\begin{proof} Let $q=\omega(G)$. Let $T_1,T_2,\ldots,T_k$ be the components of $G[A_7]$.
For each $i\in \{1,\ldots,5\}$ and  for each $j\in \{1,\ldots,k\}$: let $x_i$ be one vertex of $A_i$,   and let $t_j$ be one vertex of $V(T_j)$.  Moreover  if $|A_i|\geq 2$ we call $x_i'$ one vertex of $A_i\setminus \{x_i\}$, if $|V(T_i)|\ge 2$ we call $t_i^1$ one vertex of $V(T_i)\setminus\{t_i\}$,
if $|V(T_i)|\ge 3$ we call $t_i^2$ one vertex of $V(T_i)\setminus\{t_i, t_i^1\}$, and if $|V(T_i)|\ge 4$ we call $t_i^3$ one vertex of $V(T_i)\setminus\{t_i, t_i^1,t_i^2\}$.

Suppose that $|A_2| \leq \omega(G[A_6])$. Then by hypothesis, $G\sm A_2$ can be colored with  $\lceil\frac{5}{4}q\rceil$ colors, and since $A_6$ is complete to $A_1\cup A_3$ which is equal to $N(A_2)$,  we  can extend this coloring to $A_2$ by using the colors of $A_6$ on $A_2$.
So we may assume that $|A_2| > \omega(G[A_6])$. Likewise, $|A_5|> \omega(G[A_6])$. So it follows that no clique of $A_1\cup A_6$ is a $q$-clique of $G$.

Now consider the stable set $S:= \{x_2,x_5,t_1,\ldots,t_k\}$. We may assume that $S$ is not a good stable set of $G$ (otherwise, we can conclude with Theorem~\ref{thm:tools}(\ref{goods})). So there is a maximum clique $Q$ of $G$ contained in $A_3\cup A_4\cup A_6$.  Further, it follows that
for every maximum clique $Q$ of $G$ with $Q\cap S =\emptyset$, we have $A_3\cup A_4\subset Q$.

If $A_1\cup A_2$ is not a $q$-clique, then $\{x_3,x_5, t_1,\ldots,t_k\}$ is a good stable set of $G$, and we can conclude using Theorem~\ref{thm:tools}(\ref{goods}).  So we may assume that $A_1\cup A_2$ is a $q$-clique of $G$. Likewise, $A_1\cup A_5$ is a $q$-clique of $G$.

If $A_2\cup A_3$ is not a $q$-clique, then $\{x_1,x_4, t_1,\ldots,t_k\}$ is a good stable set of $G$, and we can conclude using Theorem~\ref{thm:tools}(\ref{goods}).  So we may assume that $A_2\cup A_3$ is a $q$-clique of $G$. Likewise, $A_4\cup A_5$ is a $q$-clique of $G$.

The above properties imply that there is an integer $a$ with $1\le a\le q-1$ such that
$|A_1|=|A_3|=|A_4|=a$ and $|A_2|=|A_5|=q-a$.  Moreover, every $q$-clique of $G$ either contains $A_i\cup A_{i+1}$, for some $i\in \{1,\ldots,5\}$, $i$ modulo $5$, or contains $T_j$, for some $j\in \{1,\ldots,k\}$.

Now if $|V(T_j)|\leq 2a$, for some $j$, then  by hypothesis, $G\sm V(T_j)$ can be colored with  $\lceil\frac{5}{4}q\rceil$ colors. Since $|A_3\cup A_4|=2a$, $V(T_j)$ is anticomplete to $A_3\cup A_4$, $N(V(T_j))\subseteq A_6$, and since $A_6$ is complete to $A_3\cup A_4$,  we  can extend this coloring to $V(T_j)$ by using the colors of $A_3\cup A_4$ on $V(T_j)$. So, we may assume that, for each $j\in\{1,\ldots,k\}$, $|V(T_j)| > 2a$.

If $a=1$, then $\mbox{deg}_G(x_2) =2 \leq \lceil\frac{5}{4}q\rceil-1$, and we can conclude with Theorem~\ref{thm:tools}(\ref{degq}). So we may assume that $a\geq 2$.

Thus for each $j\in\{1,\ldots,k\}$, we have $|V(T_j)| \geq 4$. Also, since $q-a>\omega(G[A_6])$, we have $q-a\geq 2$.

Now consider the five stable sets $\{x_1,x_3,t_1,t_2\ldots,t_k\}$, $\{x_3',x_5,t_1^1,$ $t_2^1\ldots,t_k^1\}$, $\{x_2,x_5',t_1^2,t_2^2,\ldots,t_k^2\}$, $\{x_2',x_4,t_1^3,t_2^3\ldots,t_k^3\}$, and $\{x_1',x_4'\}$. It is easy to see that their union $U$ meets every $q$-clique of $G$  four times. It follows that $\omega(G\setminus U) = q-4$, and we can conclude using Theorem~\ref{thm:tools}(\ref{fives}). \end{proof}

\noindent{\bf Proof of Theorem~\ref{thm:54bound}}. If $G$ is perfect, then $\chi(G) = \omega(G)$ and the theorem holds. So we may assume that $G$ is not perfect, and that $G$ is connected. Since a $P_5$-free graph contains no hole of length at least $7$, and a gem-free graph contains no antihole of length at least $7$, it follows from the Strong Perfect Graph Theorem \cite{SPGT} that $G$ contains a hole of length $5$. That is, $G$ contains a $C_5$ as an induced subgraph.
Now, the result follows directly from Theorem~\ref{thm:struc},  Lemma~\ref{lem:red2}, and Theorems~\ref{thm:54-G1-G6}, \ref{thm:54-G7}, \ref{thm:54-G8910} and \ref{thm:54-H}. \hfill{$\Box$}

\medskip
 \noindent{\bf Acknowledgements}. The second author would like to  thank Mathew~C.~Francis for helpful discussions.

\small

\end{document}